\newenvironment{enumeratei}{\begin{enumerate}[\upshape (i)]}{\end{enumerate}}
\numberwithin{equation}{section}
\theoremstyle{plain}
 \newtheorem{theorem}{Theorem}[section]
 \newtheorem{lemma}[theorem]{Lemma}
 \newtheorem{proposition}[theorem]{Proposition}
 \newtheorem{corollary}[theorem]{Corollary}
\theoremstyle{definition}
 \newtheorem{definition}[theorem]{Definition}
 \newtheorem{remark}[theorem]{Remark}
\theoremstyle{remark}
 \newtheorem{case}{Case}
\newcommand \datum {\hfill January 4, 2018}
\newcommand \Con  {\textup{Con}}
\newcommand \Sub  {\textup{Sub}}
\newcommand \ncsl  {\textup{N}\kern-0.8pt\textup{C}\textup{S}\kern-0.2pt\kern-0.3pt\textup{L}}
\newcommand \ncv[1]  {\textup{N}\kern-0.8pt\textup{C}(#1,}
\newcommand \var[1] {\mathcal{#1}}
\newcommand \slvar {\textup{SLat}_{\kern-1pt\wedge}}
\newcommand \plu [1] {#1^{+}}
\newcommand \jplu {\tuple{\plu S;\vee}}
\newcommand \sla {\tuple{S;\wedge}}
\newcommand \subplu {\Sub(\plu S;\vee)}
\newcommand \pset [1] {P(#1)}
\newcommand \tcon {\boldsymbol\tau}
\newcommand \ubta {ubt-antichain}
\newcommand \blokk [2] {#1/#2}
\newcommand \tbl [1] {\blokk {#1}\Theta}
\newcommand\ideal[1]{\mathord\downarrow #1}
\newcommand\filter[1]{\mathord\uparrow #1}
\newcommand \tbf [1] {\textbf{#1}} 
\newcommand \set[1] {\{#1\}}
\newcommand \tuple [1] {\langle #1\rangle}
\newcommand \pair [2] {\tuple{#1,#2}}
\newcommand \red [1] {\color{red}#1\color{black}}
\newcommand\url [1] {{\texttt{#1}}}
\begin{document}
\title
[Finite semilattices with many congruences]
{Finite semilattices with many congruences}

\author[G.\ Cz\'edli]{{G\'abor Cz\'edli}}
\email{czedli@math.u-szeged.hu}
\urladdr{http://www.math.u-szeged.hu/~czedli/}
\address{Bolyai Institute, University of Szeged, Hungary 6720}

\begin{abstract} For an integer $n\geq 2$, let $\ncsl(n)$ denote the set of sizes of congruence lattices of $n$-element semilattices. We find the four largest numbers belonging to $\ncsl(n)$, provided that $n$ is large enough to ensure that $|\ncsl(n)|\geq 4$. Furthermore, we describe the $n$-element semilattices witnessing these numbers.
\end{abstract}

\subjclass {06A12, secondary 06B10 {{\color{red}
\datum{}\color{black}}}}
\keywords{Number of lattice congruences, size of the congruence lattice of a finite lattice, lattice with many congruences}

\thanks{This research was
supported by
the Hungarian Research Grant KH 126581}

\maketitle
\section{Introduction and motivation}
The present paper is primarily motivated by a problem on tolerance relations of lattices  raised by Joanna Grygiel in her conference talk in September, 2017, which was a continuation of G\'ornicka, Grygiel, and Tyrala~\cite{GGrygielT}. Further motivation is supplied by Cz\'edli~\cite{czedliMC}, Cz\'edli and  Mure\c san~\cite{czgmuresan}, Kulin and Mure\c san~\cite{kulinmuresan}, and Mure\c san~\cite{muresan2017arXiv}, still dealing with lattices rather than semilattices. 

As usual, $\Con(A)$ will stand for the \emph{lattice of congruences} of an algebra $A$. Given a natural number $n\geq 2$ and a variety $\var V$ of algebras, the task of
\begin{equation}
\parbox{8 cm}{finding the \emph{small} numbers in the set
$\ncv{\var V}n):=\set{|\Con(A)|: A\in \var V\text{ and }|A|=n }$
and  \emph{describing} the algebras $\var V$ witnessing these numbers}
\label{eqpbxsTrG}
\end{equation}
has already deserved some attention for various varieties $\var V$, because the description of the simple $n$-element algebras in $\var V$ for various varieties $\var V$ and, in particular, even the Classification of Finite Simple Groups belong to \eqref{eqpbxsTrG} in some vague sense.  The present paper addresses an analogous problem, which is obtained from \eqref{eqpbxsTrG} by changing ``small'' to ``large''. 
Of course, this problem is hopeless for an arbitrary variety $\var V$. However, if $\var V$ is the variety $\slvar$ of \emph{meet semilattices}, then we can benefit from  Freese and Nation's classical description of the congruence lattices of finite members of $\slvar$; see \cite{freesenation}. Let us fix the following notation
\begin{equation}
\ncsl(n):=\ncv{\slvar}n)=|\set{\Con(S):S\in\slvar\text{ and }|S|=n}|;
\end{equation}
the acronym $\ncsl$ comes from ``Number of Congruences of SemiLattices''.
Our target is to determine the four largest numbers belonging to $\ncsl(n)$ and, in addition, to describe the $n$-element semilattices witnessing the these numbers.

\subsection*{Outline}
The rest of the paper is structured as follows. In Section~\ref{sectmainresult}, we introduce a semilattice construction, and we use this construction in formulating the main result, Theorem~\ref{thmmain}, to realize our target mentioned above. This section concludes with a corollary stating that a semilattice with sufficiently many  congruences is planar.  Section~\ref{sectproof} is devoted to the proof of this theorem.

\begin{figure}[ht] 
\centerline
{\includegraphics[scale=1.0]{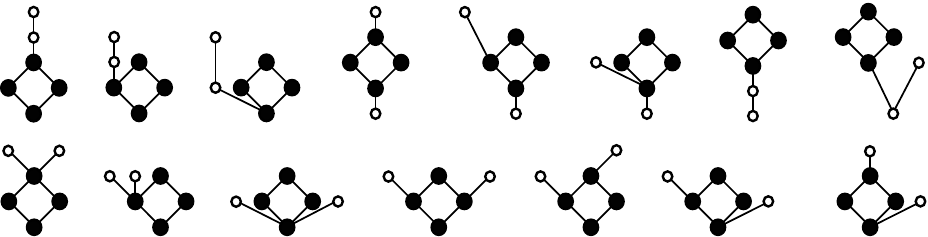}}
\caption{The full list of 6-element meet semilattices with exactly $28=28\cdot 2^{6-6}$ many congruences}
\label{figone}
\end{figure}

\section{Quasi-tree semilattices and our theorem}~\label{sectmainresult} We  follow the standard terminology and notation; see, for example, Gr\"atzer~\cite{ggfoundbook} and \cite{ggCFL2}. 
Even without explicitly saying so all the time, by a \emph{semilattice} we always mean a \emph{finite meet} semilattice $S$, that is, a finite member of $\slvar$. Such an $S=\sla$ has a least element $0=\bigwedge S$. We always denote $S\setminus \set 0$ by $\plu S$. 
If no two incomparable elements of $S$ has an upper bound, then $S$ is called a \emph{tree semilattice}. 

Next, for a meet semilattice $S$,  the congruence $\tcon=\tcon(S;\wedge)$ generated by 
\begin{equation}\set{\pair {a\wedge b}{a\vee b}: a,b\in\plu S,\text{ } a\parallel b,\text{ and } a\vee b\text{ exists in }\jplu}
\label{eqtrCnGr}
\end{equation}
will be called the \emph{tree congruence} of $\sla$. 
Of course, we can write $a,b\in S$ instead of $a,b\in \plu S$ above. Observe that for $a,b\in \plu S$,
\begin{equation}
\text{$\set{a,b}$ has an upper bound in $S$ iff $a\vee b$ exists in $\jplu$;}
\label{eqpbxCpBndxShR}
\end{equation}
hence instead of requiring the join $a\vee b\in\jplu$, it suffices to require an upper bound of $a$ and $b$ in \eqref{eqtrCnGr}. The name ``tree congruence'' is explained by the following easy statement, which will be proved in Section~\ref{sectproof}.

\begin{figure}[ht] 
\centerline
{\includegraphics[scale=1.0]{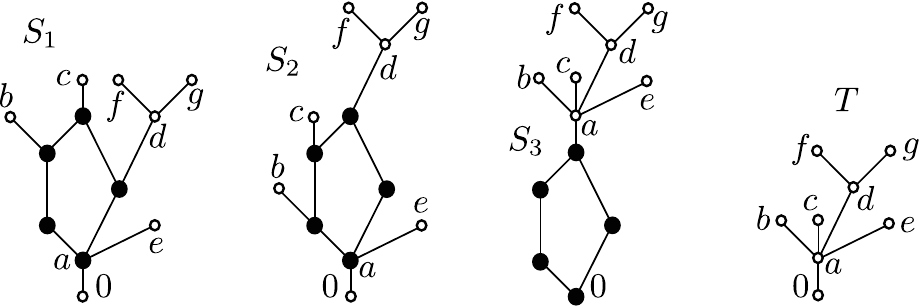}}
\caption{Three twelve-element meet semilattices with the same skeleton $T$ and the same number, 
 $26\cdot 2^{12-6}=1664$, of congruences}
\label{figtwo}
\end{figure}

\begin{proposition}\label{proptrCngR} For an arbitrary meet semilattice $\sla$, 
the quotient meet semilattice $\sla/\tcon$ is a tree. 
\end{proposition}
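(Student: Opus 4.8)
The plan is to work with the canonical surjective meet-homomorphism $f\colon\sla\to\sla/\tcon$ and to check directly that $\sla/\tcon$ (which is again a finite meet-semilattice) satisfies the defining property of a tree semilattice. Equivalently, I would prove the contrapositive: any two elements of $\sla/\tcon$ that have a common upper bound are comparable.

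The core step is to establish the following property of $f$: if $x,y\in S$ have a common upper bound in $S$, then $f(x)$ and $f(y)$ are comparable in $\sla/\tcon$. When $x,y$ are comparable this is trivial, so I would treat the case $x\parallel y$. Then neither $x$ nor $y$ equals $0$ (the bottom is comparable with everything), so $x,y\in\plu S$, and by \eqref{eqpbxCpBndxShR} the join $x\vee y$ exists in $\jplu$; hence $\pair{x\wedge y}{x\vee y}$ is one of the pairs generating $\tcon$, so $f(x\wedge y)=f(x\vee y)$. Noting that the least upper bound of $\set{x,y}$ taken in $\jplu$ coincides with the one taken in $S$ (every upper bound of $x$ and $y$ lies above $x>0$, hence lies in $\plu S$), we have $x\leq x\vee y$, and therefore $f(x)\leq f(x\vee y)=f(x\wedge y)=f(x)\wedge f(y)\leq f(x)$; thus $f(x)=f(x)\wedge f(y)\leq f(y)$, and symmetrically $f(y)\leq f(x)$, so in fact $f(x)=f(y)$, which in particular is a comparability.

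With this in hand the conclusion is short. Given $u,v,w\in\sla/\tcon$ with $u\leq w$ and $v\leq w$, I would fix $c\in S$ with $f(c)=w$ and an arbitrary $a'\in f^{-1}(u)$; since $f$ preserves meets, $f(a'\wedge c)=f(a')\wedge f(c)=u\wedge w=u$, so $a:=a'\wedge c$ is a preimage of $u$ with $a\leq c$, and likewise one finds a preimage $b$ of $v$ with $b\leq c$. Then $c$ is a common upper bound of $a$ and $b$ in $S$, so the core step forces $u=f(a)$ and $v=f(b)$ to be comparable, as required.

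The step I expect to need the most care is this last reduction to representatives lying below a single preimage of $w$; it is legitimate precisely because order in the meet-semilattice quotient is detected by meets, which is what makes the passage from $a'$ to $a'\wedge c$ valid. A smaller point to verify is the coincidence of $\jplu$-joins with least upper bounds in $S$ used in the core step. Apart from these, the argument only ever invokes the fact that the generating pairs of $\tcon$ belong to $\tcon$, so no Mal'cev-type description of the generated congruence is needed.
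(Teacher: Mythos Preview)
Your proof is correct and follows essentially the same route as the paper's: both arguments replace the given representatives by their meets with a fixed preimage $c$ of the common upper bound, and then invoke the generating pair $\pair{x\wedge y}{x\vee y}\in\tcon$ to collapse the two classes. The only cosmetic difference is that the paper phrases the final step via convexity of congruence blocks (from $x\wedge y\leq x\leq x\vee y$ and $\pair{x\wedge y}{x\vee y}\in\tcon$ conclude $x$ lies in the same block), whereas you phrase it via monotonicity and meet-preservation of the quotient map $f$; these are the same observation in slightly different clothing.
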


\begin{definition}\label{defQTRsL} By a \emph{quasi-tree semilattice} we mean a finite meet semilattice $\sla$ such that 
its tree congruence $\tcon=\tcon(S;\wedge)$ has exactly one nonsingleton block.  If $\sla$ is a quasi-tree semilattice, then the unique nonsingleton block of $\tcon$, which is a meet semilattice,  and the quotient semilattice $\sla/\tcon$ are called the \emph{nucleus} and the \emph{skeleton} of  $\sla$.
\end{definition}

Some quasi-tree semilattices are shown in Figures~\ref{figone},  \ref{figtwo}, and \ref{figthree}.
In these figures, the elements of the nuclei are the black-filled ones, while the empty-filled smaller circles stand for the rest of elements.
Although a quasi-tree semilattice $\sla$ is not determined by its skeleton and nucleus in general, the skeleton and the nucleus together carry a lot of information on $\sla$. In order to make the numbers occurring in the following theorem easy to compare, we give them in a redundant way as multiples of $2^{n-6}$.

\begin{theorem}\label{thmmain} If $\sla$ is a finite meet semilattice of size $n=|S|>1$, then the following hold.
\begin{enumeratei}
\item\label{thmmaina} $\sla$ has at most $2^{n-1}=32\cdot 2^{n-6}$ many congruences. Furthermore, we have that $|\Con(S;\wedge)|=2^{n-1}$  if and only if $\sla$ is a tree semilattice. 
\item\label{thmmainb} If $\sla$ has \emph{less} than  $2^{n-1}=32\cdot 2^{n-6}$  congruences, then it has at most $28\cdot 2^{n-6}$  congruences. Furthermore, $|\Con(S;\wedge)|=28\cdot 2^{n-6}$ if and only if $\sla$ is a quasi-tree semilattice 
and its nucleus is the four-element boolean lattice; see Figure~\textup{\ref{figone}} for $n=6$.
\item\label{thmmainc} If $\sla$ has \emph{less} than  $28\cdot 2^{n-6}$  congruences, then it has at most $26\cdot 2^{n-6}$  congruences. Furthermore, $|\Con(S;\wedge)|=26\cdot 2^{n-6}$ if and only if $\sla$ is a quasi-tree semilattice such that its nucleus is the pentagon $N_5$; see Figure~\textup{\ref{figfour}} and $S_1,\dots, S_3$ in Figure~\textup{\ref{figtwo}}.
\item\label{thmmaind} If $\sla$ has \emph{less} than  $26\cdot 2^{n-6}$  congruences, then it has at most $25\cdot 2^{n-6}$  congruences.  Furthermore, $|\Con(S;\wedge)|=25\cdot 2^{n-6}$ if and only if $\sla$ is a quasi-tree semilattice such that its nucleus is either  $F$, or  $N_6$; see Figure~\textup{\ref{figfour}} and $S_4,\dots, S_7$ in  Figure~\textup{\ref{figthree}}.
\end{enumeratei}
\end{theorem}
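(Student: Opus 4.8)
The plan is to prove items~\ref{thmmaina}--\ref{thmmaind} simultaneously by strong induction on $n$; the base of the induction, say $n\le6$, is a finite (if laborious) verification. The starting point is the combinatorial description of $\Con(\sla)$ coming from Freese and Nation~\cite{freesenation}: every congruence of a finite meet semilattice is the join of the principal congruences $\con(a,b)$ with $a\prec b$ a covering pair, and for such a pair the block structure of $\con(a,b)$ is easy to read off from the order. If $\sla$ is a tree semilattice, then every principal ideal is a chain, so $\con(a,b)$ collapses just $\set{a,b}$, the $n-1$ covering pairs are ``independent'', and collapsing an arbitrary subset of them is again a congruence; hence $\Con(\sla)\cong\tuple{2^{n-1};\subseteq}$ and $|\Con(\sla)|=2^{n-1}$. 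Conversely, if $\sla$ is not a tree, then $\tcon\neq\Delta$ by \eqref{eqpbxCpBndxShR}, and a short argument based on Proposition~\ref{proptrCngR} gives $|\Con(\sla)|<2^{n-1}$. This settles item~\ref{thmmaina} and reduces everything else to the case $\tcon\neq\Delta$.

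The crux is a decomposition lemma. Put $T=\sla/\tcon$, a tree on $m=|T|$ elements by Proposition~\ref{proptrCngR}, and let $B_1,\dots,B_r$ be the nonsingleton blocks of $\tcon$; each $B_j$ is a convex meet subsemilattice which, as a semilattice, is not a tree (its own tree congruence is the full congruence), whence $|B_j|=k_j\ge4$, and it is precisely the kind of semilattice that can occur as a nucleus. I would prove that sending a congruence of $\sla$ to the tuple of its restrictions to $T$ and to the $B_j$'s is an order embedding $\Con(\sla)\hookrightarrow\Con(T)\times\prod_j\Con(B_j)$ that is onto when $r\le1$; combined with $|\Con(T)|=2^{m-1}$ (item~\ref{thmmaina} for $T$) this yields
\[
|\Con(\sla)|\;\le\;2^{m-1}\prod_{j=1}^{r}|\Con(B_j)|,\qquad\text{with equality if }r\le1.
\]
Writing $e(N):=|\Con(N)|/2^{|N|-1}$ for a potential nucleus $N$ and using $m-1=n-1-\sum_j(k_j-1)$, this becomes $|\Con(\sla)|\le 2^{n-1}\prod_j e(B_j)$, an equality exactly for quasi-tree semilattices, where $r=1$, $B_1$ is the nucleus $N$ of size $k$, and $|\Con(\sla)|=2^{n-k}\,|\Con(N)|$.

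It remains to control $e(N)$ for the possible nuclei $N$, $k=|N|$. When $k<n$ the induction hypothesis applies to $N$ (which is not a tree), giving $|\Con(N)|\le28\cdot2^{k-6}$, that is $e(N)\le7/8$; moreover a nucleus has \emph{no} singleton $\tcon$-block, so the equality clauses of items~\ref{thmmainb}--\ref{thmmaind} force $e(N)=7/8$ only for the four-element boolean lattice, $e(N)=13/16$ only for $N\cong N_5$, and $e(N)=25/32$ only for $N\cong F$ or $N\cong N_6$, and they allow no value of $e$ strictly between $25/32$ and $13/16$, nor strictly between $13/16$ and $7/8$; for $k\le6$ this is read off the base case (there the smallest nucleus is the four-element boolean lattice with $7$ congruences, whereas, e.g., $M_3$ has only $12$, hence a strictly smaller efficiency). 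Since $e(N)\le7/8<1$, any $\sla$ with $r\ge2$ satisfies $|\Con(\sla)|\le2^{n-1}(7/8)^2=49\cdot2^{n-7}<25\cdot2^{n-6}$ and so cannot attain any of the four largest numbers; hence only tree semilattices and quasi-tree semilattices are relevant. For a quasi-tree $|\Con(\sla)|=2^{n-1}e(N)$, and running through the admissible values of $e(N)$ produces exactly $2^{n-1}$, $28\cdot2^{n-6}$, $26\cdot2^{n-6}$, $25\cdot2^{n-6}$, with the stated nuclei as witnesses; the successive ``less than'' hypotheses in items~\ref{thmmainb}--\ref{thmmaind} simply pick out the successive cases.

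I expect the two genuinely hard points to be: (i)~the decomposition lemma of the second paragraph---proving that a congruence of $\sla$ is determined by, and can be reassembled freely from, its restrictions to $T$ and to the blocks, together with the claim that each nonsingleton $\tcon$-block has full tree congruence; this is where finiteness and the precise anatomy of $\con(a,b)$ really get used; and (ii)~the base case, namely the finite but fiddly determination of $|\Con(N)|$ for each of the few nuclei with at most six elements, since the whole optimization is calibrated against those numbers.
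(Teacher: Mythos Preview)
Your product formula $|\Con(\sla)|=2^{m-1}\prod_j|\Con(B_j)|$ is correct, and in fact it is an \emph{equality} for every $r$, not only for $r\le 1$: via Freese--Nation one checks that a subset $X\subseteq \plu S$ is $\vee$-closed iff each $X\cap B_j^+$ is, because every \ubta{} together with its join lies inside a single $\tcon$-block. But the inductive scheme has a real gap. Your reduction $|\Con(\sla)|=2^{n-k}\,|\Con(N)|$ carries information only when the nucleus $N$ is a \emph{proper} subset of $S$; when $\tcon=S\times S$, so that $S$ is its own nucleus and $k=n$, the identity degenerates to the tautology $|\Con(S)|=|\Con(S)|$ and the induction is empty. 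This self-nucleated case is not confined to small sizes---one can build such $S$ for every $n\ge4$ (stack boolean squares, for instance)---so it cannot be absorbed into the base case $n\le 6$. You would therefore need an independent argument that every self-nucleated $S$ with $n\ge 7$ satisfies $e(S)<25/32$, and the obvious candidate reductions do not give it: deleting a maximal element $m$ yields only $e(S)\le e(S\setminus\{m\})$, and $S\setminus\{m\}$ may well be a tree (as happens for $M_3$), making the bound vacuous. Your claim that every nucleus has full tree congruence is also asserted without proof; it is plausible but not obvious, and you rely on it to pass from ``$N$ is a quasi-tree with nucleus $B_2^2$'' to ``$N\cong B_2^2$'' in the equality clauses.

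The paper avoids all of this by not inducting. It works directly with $|\Con(\sla)|=|\subplu|=2^{n-1}-|\{X:\text{$X$ contains some \ubta{} but not its join}\}|$ and bounds the bad set by inclusion--exclusion over the families $U(a,b)=\{X: a,b\in X,\ a\vee b\notin X\}$, one per \ubta. A case analysis of how two or three \ubta{}s can share elements produces the thresholds $28\cdot2^{n-6}$, $26\cdot2^{n-6}$, $25\cdot2^{n-6}$ together with the extremal \ubta{} configurations, and a few short structural lemmas then recognise each configuration as the stated quasi-tree. This uniform counting handles the self-nucleated case on the same footing as every other, which is precisely the case your induction cannot reach.
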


\begin{figure}[ht] 
\centerline
{\includegraphics[scale=1.0]{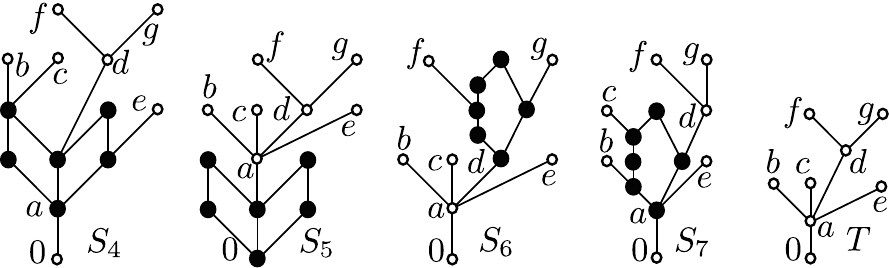}}
\caption{Four thirteen-element meet semilattices with the same skeleton $T$ and the same number, $25\cdot 2^{13-6}=3200$, of congruences}
\label{figthree}
\end{figure}

\begin{remark}\label{remarksmLsSm} Although Theorem~\ref{thmmain} holds for all $n\geq 2$, neither it gives the \emph{four largest} numbers of $\ncsl(n)$, nor it says too much for $n\leq 5$. For example,  $25\cdot 2^{n-6}$ is not even an integer if $n\leq 5$. Hence, we note the following facts without including their trivial proofs in the paper.
\begin{enumerate}[\upshape\quad (A)]
\item $\ncsl(2)=\set{2=2^{2-1}}$
\item $\ncsl(3)=\set{4=2^{3-1}}$
\item $\ncsl(4)=\set{8=2^{4-1}, \,\, 7=28\cdot 2^{4-6}}$
\item $\ncsl(5)=\set{16=2^{5-1}, \,\, 14=28\cdot 2^{5-6}, \,\, 13=26\cdot 2^{5-6}, \,\,12 }$.
Note that $12$ is witnessed by $M_3=\tuple{M_3,\wedge}$; see Figure~\ref{figfour}.
\end{enumerate}
\end{remark}

\begin{figure}[ht] 
\centerline
{\includegraphics[scale=1.0]{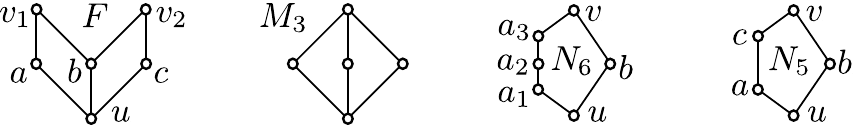}}
\caption{$F$, $M_3$,  $N_6$, and the pentagon, $N_5$}
\label{figfour}
\end{figure}

A semilattice is \emph{planar} if it has a planar Hasse diagram, that is a Hasse diagram in which edges can intersect only at their endpoints, that is, at vertices.  Theorem~\ref{thmmain} immediately implies the following statement.

\begin{corollary}\label{corolplanar} If an $n$-element meet semilattice has at least $25\cdot 2^{n-6}$ congruences, then it  is planar. 
\end{corollary}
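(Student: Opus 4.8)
The plan is to derive Corollary~\ref{corolplanar} directly from Theorem~\ref{thmmain}, since the statement of the corollary already advertises this. The key observation is that if an $n$-element meet semilattice $\sla$ has at least $25\cdot 2^{n-6}$ congruences, then by parts~\eqref{thmmaina}--\eqref{thmmaind} of Theorem~\ref{thmmain} exactly one of the following holds: either $\sla$ is a tree semilattice, or it is a quasi-tree semilattice whose nucleus is one of the five finitely many lattices $\{0,1\}^2$ (the four-element boolean lattice), $N_5$, $F$, $N_6$, or $M_3$. Wait --- one has to be slightly careful here: $M_3$ does not appear among \eqref{thmmainb}--\eqref{thmmaind}, but it is mentioned in Remark~\ref{remarksmLsSm}(D) for $n=5$; for $n\geq 6$ the nucleus of a semilattice with at least $25\cdot 2^{n-6}$ congruences must be one of the first four. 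So the first step is to reduce to these finitely many structural possibilities.

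Next I would argue that tree semilattices are planar. A tree semilattice $\sla$, by definition, has the property that no two incomparable elements have an upper bound; equivalently, its Hasse diagram, drawn with $0$ at the bottom, is an actual tree in the graph-theoretic sense (every element has a unique maximal chain down to $0$ determined by its lower covers, and the order is the ``ancestor'' relation). A finite tree is planar: one draws it greedily, laying out the children of each node in disjoint horizontal slabs, so no two edges cross. This is the easy case and only needs a sentence.

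The remaining case is that $\sla$ is a quasi-tree semilattice with a small nucleus. Here the strategy is to exploit the structure ``skeleton plus nucleus'': the skeleton $\sla/\tcon$ is a tree (Proposition~\ref{proptrCngR}), hence planar, and all but one of its elements are singleton blocks, while the one nonsingleton block --- the nucleus --- is one of at most five explicitly given small lattices, each of which is visibly planar (one can point to Figures~\ref{figone}--\ref{figfour}, where planar diagrams are exhibited). So one takes a planar drawing of the skeleton tree, identifies the vertex corresponding to the nucleus, ``blows it up'' into a planar drawing of the nucleus, and reattaches the edges of the skeleton that were incident to that vertex --- these correspond to comparabilities between elements outside the nucleus and the top or bottom of the nucleus, so they can be routed to appropriate extreme vertices of the nucleus without creating crossings. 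A clean way to phrase this avoids the blow-up surgery: simply note that one can enumerate, up to isomorphism, the possible skeleton-plus-nucleus data together with the bounding maps, but that is overkill.

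The honest main obstacle is making the ``blow-up'' argument rigorous, i.e.\ verifying that the external edges of the skeleton attach to the nucleus in a planar way. The cleanest fix, and the one I would actually use, is to observe that in a quasi-tree semilattice the nucleus $N$ has a least element $0_N$ and a greatest element $1_N$ (it is a block of a semilattice congruence on a meet semilattice, so it is closed under meet, and it is a subinterval of $S$ --- it is in fact convex and bounded), every element of $S$ strictly below the nucleus sits below $0_N$, and the elements above or incomparable-with relations all factor through $1_N$; thus externally the nucleus behaves like a single edge from $0_N$ to $1_N$ within the tree-like global structure. Since each of the at most five candidate nuclei admits a planar Hasse diagram in which $0_N$ is at the bottom and $1_N$ at the top (inspect Figure~\ref{figfour}), we may substitute that drawing for the corresponding edge of a planar drawing of the skeleton tree. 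Putting the three cases together completes the proof.

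\begin{proof}[Proof of Corollary~\textup{\ref{corolplanar}}]
Let $\sla$ be an $n$-element meet semilattice with $|\Con(S;\wedge)|\geq 25\cdot 2^{n-6}$. By Theorem~\ref{thmmain}, either $\sla$ is a tree semilattice, or it is a quasi-tree semilattice whose nucleus is one of finitely many small lattices, namely the four-element boolean lattice, $N_5$, $F$, $N_6$, and (only when $n\le 5$) $M_3$.

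If $\sla$ is a tree semilattice, then no two incomparable elements have an upper bound, so the Hasse diagram of $\sla$, drawn with $0$ at the bottom, is a rooted tree in the graph-theoretic sense; a finite tree admits a planar drawing (lay out the lower covers of each element in pairwise disjoint horizontal regions), hence $\sla$ is planar.

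Now assume $\sla$ is a quasi-tree semilattice with skeleton $T=\sla/\tcon$ and nucleus $N$. By Proposition~\ref{proptrCngR}, $T$ is a tree semilattice, hence planar by the previous paragraph. The nucleus $N$, being a $\tcon$-block of the meet semilattice $\sla$, is a convex meet subsemilattice of $S$; since it is finite it has a least element $0_N$ and, being a single block of $\tcon$, also a greatest element $1_N$. Every element of $S\setminus N$ is either below $0_N$ or comparable only through $1_N$ to the elements of $N$; thus, relative to the rest of $\sla$, the block $N$ occupies the position of the single edge of $T$ that joins the two cover-neighbours of the vertex $\tbl N$ in the tree $T$ (or a pendant edge, if $\tbl N$ is a leaf). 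Each of the at most five possible nuclei listed above admits a planar Hasse diagram with $0_N$ at the bottom and $1_N$ at the top, as exhibited in Figures~\textup{\ref{figone}}--\textup{\ref{figfour}}. Replacing, in a planar drawing of the tree $T$, the vertex $\tbl N$ together with its incident edges by such a planar drawing of $N$ --- attaching the edges from below to $0_N$ and the edges from above to $1_N$ --- yields a planar Hasse diagram of $\sla$. Hence $\sla$ is planar in every case.
\end{proof}
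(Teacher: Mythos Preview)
Your approach is exactly what the paper intends: it offers no proof beyond the sentence ``Theorem~\ref{thmmain} immediately implies the following statement,'' so reducing via Theorem~\ref{thmmain} to tree semilattices and to quasi-tree semilattices with one of a handful of small nuclei, followed by a direct planarity check, is the right route.

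There is, however, a genuine gap in your write-up. You claim that the nucleus $N$, ``being a single block of $\tcon$, also [has] a greatest element $1_N$,'' and your blow-up attaches every upward external edge to this $1_N$. This fails precisely for the nucleus $F$ of Figure~\ref{figfour}: by Lemma~\ref{lemmabowtie}, $F$ has two incomparable maximal elements $v_1,v_2$ and no top. Moreover, even the informal picture ``edges from below go to $0_N$, edges from above go to $1_N$'' is too coarse. Take $S=F\cup\{x\}$ with $x$ covering $u=0_N$ and incomparable to $a,b,c,v_1,v_2$; one checks directly (the only \ubta{}s are still $\{a,b\}$ and $\{b,c\}$, and $\tcon$ has the single nonsingleton block $F$) that $S$ is a quasi-tree semilattice with nucleus $F$, yet the external element sits \emph{above} the block $N$ in the skeleton while its Hasse edge in $S$ lands on $0_N$, not on any maximal element of $N$. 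Your sentence about $N$ ``occupying the position of the single edge of $T$ that joins the two cover-neighbours'' is likewise inaccurate, since that vertex may have many upper covers in the tree $T$. The repair is easy but must be stated: each of the four candidate nuclei admits a planar Hasse diagram with \emph{every} vertex on the outer face (for $F$, the outer boundary can be taken as $u,a,v_1,b,v_2,c$), so whichever nucleus element an external cover-edge meets, the corresponding tree branch can be attached in the plane without crossings. As a minor aside, $M_3$ should not appear in your list: $|\Con(M_3;\wedge)|=12<12.5=25\cdot 2^{5-6}$, so it never satisfies the hypothesis of the corollary.
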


The following statement is due to Freese~\cite{freesecomp}; see also Cz\'edli~\cite{czedliMC} for a second proof, which gives the first half of the following corollary  for arbitrary finite algebras in congruence distributive varieties, not only for lattices.

\begin{corollary}\label{coroloWBt} For every $n$-element lattice $L$, we have that $|\Con(L)|\leq 2^{n-1}$. Furthermore, $|\Con(L)|= 2^{n-1}$ if and only if $L$ is a chain.
\end{corollary}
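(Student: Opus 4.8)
The plan is to derive this corollary directly from Theorem~\ref{thmmain}\eqref{thmmaina}, applied to the meet-semilattice reduct $\sla$ of the lattice $L$. First I would observe that every congruence of the lattice $L$ is, in particular, a congruence of $\sla$, since compatibility with $\wedge$ and $\vee$ certainly implies compatibility with $\wedge$ alone. Hence $\Con(L)$ is a subset (in fact a sublattice, but only the cardinality comparison is needed) of $\Con(S;\wedge)$, and together with Theorem~\ref{thmmain}\eqref{thmmaina} this yields $|\Con(L)|\leq|\Con(S;\wedge)|\leq 2^{n-1}$.

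For the equality part, suppose $|\Con(L)|=2^{n-1}$. Then $2^{n-1}\leq|\Con(S;\wedge)|\leq 2^{n-1}$ forces $|\Con(S;\wedge)|=2^{n-1}$, so by the equality clause of Theorem~\ref{thmmain}\eqref{thmmaina} the semilattice $\sla$ is a tree semilattice, i.e., no two incomparable elements of $S$ have an upper bound. But in a lattice any two elements $a,b$ admit the join $a\vee b$ as an upper bound; therefore $L$ has no pair of incomparable elements, that is, $L$ is a chain.

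Conversely, if $L$ is an $n$-element chain, then its meet-semilattice reduct $\sla$ is trivially a tree semilattice, so $|\Con(S;\wedge)|=2^{n-1}$ by Theorem~\ref{thmmain}\eqref{thmmaina}. Moreover, on a chain every meet-semilattice congruence has interval blocks and is for that reason automatically a lattice congruence, so $\Con(L)=\Con(S;\wedge)$ and $|\Con(L)|=2^{n-1}$. (Alternatively one may count directly: the congruences of an $n$-element chain are in bijection with the subsets of the $n-1$ prime intervals that are collapsed.)

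The argument is essentially routine once Theorem~\ref{thmmain} is available; the only points requiring a moment's care are the easy lattice-theoretic observation that a lattice whose meet reduct is a tree semilattice must be a chain, and the dual remark that on a chain the meet-semilattice congruences coincide with the lattice congruences. Neither presents a genuine obstacle.
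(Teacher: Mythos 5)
Your proposal is correct and follows essentially the same route as the paper: both reduce to Theorem~\ref{thmmain}\eqref{thmmaina} via the inclusion $\Con(L;\vee,\wedge)\subseteq\Con(L;\wedge)$, use the observation that a lattice whose meet reduct is a tree semilattice must be a chain, and note that on a chain the meet-semilattice congruences coincide with the lattice congruences. Your treatment of the equality case is, if anything, slightly more explicit than the paper's, but the ingredients are identical.
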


As a preparation for a remark below, we derive this corollary from Theorem~\ref{thmmain}\eqref{thmmaina}
here rather than in the next section.

\begin{proof}[Proof of Corollary~\textup{\ref{coroloWBt}}]
The only $n$-element tree semilattice that is also a lattice is the $n$-element chain. For an equivalence relation $\Theta$ on this chain $\tuple{C;\leq}$, 
\begin{equation}
\parbox{6.2cm}{$\Theta\in\Con(C;\wedge)$ iff $\Theta\in\Con(C;\vee,\wedge)$ iff every $\Theta$-block is an interval of $\tuple{C;\leq}$.}
\label{eqpbxdzRtsqzp}
\end{equation}
Observe that every $\Theta\in\Con(L;\vee,\wedge)$ also belongs to $\Con(L;\wedge)$. Hence, using Theorem~\ref{thmmain}\eqref{thmmaina} at $\leq^\ast$ below, we obtain that 
\[
|\Con(L;\vee,\wedge)|\leq  |\Con(L;\wedge)| \leq^\ast |\Con(C;\wedge)| = |\Con(C;\vee,\wedge)|,
\]
proving Corollary~\ref{coroloWBt}.
\end{proof}

Next, we point out that Theorem~\ref{thmmain}\eqref{thmmaina} 
plays an essential role in the proof above. 

\begin{remark} 
The second part of  \eqref{eqpbxdzRtsqzp} might give the false feeling that Szpilrajn's Extension Theorem \cite{szpilrajn} in itself implies  Corollary~\ref{coroloWBt} as follows: extend the ordering relation of $L$ to a linear ordering to obtain a chain; then we obtain more intervals and thus more equivalences whose blocks are intervals, and so more congruences by \eqref{eqpbxdzRtsqzp}. 
In order to point out that this argument does not work, 
let $\tuple{L;\leq_1}$ be the direct product of the two-element chain and the three-element chain. Although $\leq_1$ can be extended to a linear ordering $\leq_2$ and the chain $\tuple{L;\leq_2}$ has more intervals than  $\tuple{L;\leq_1}$, the lattice 
$\tuple{L;\leq_1}$ has 34 equivalences whose blocks are intervals but the chain  $\tuple{L;\leq_2}$ has only 32.
\end{remark}

\section{Proofs}\label{sectproof}

\begin{proof}[Proof of Proposition~\textup{\ref{proptrCngR}}]
A subset $X$ of $\sla$ is said to be \emph{convex}, if $x<y<z$ and $x,z\in X$ imply that $y\in X$, for any $x,y,z\in S$.
It is well known that 
\begin{equation}
\text{the blocks of every congruence of $\sla$ are convex subsets.}
\label{eqtxtconblconVex}
\end{equation}
Indeed, if $\Theta\in\Con(S;\wedge)$, $x\leq y\leq z$ and $\pair x z\in\Theta$, then $\pair {x}{y}=\pair {x\wedge y}{z\wedge y}\in\Theta$, whereby $y\in\blokk x\Theta$, which shows \eqref{eqtxtconblconVex}. By \eqref{eqtxtconblconVex},  the $\tcon$-blocks are convex subsets of $\sla$. 
Next, for the sake of contradiction, suppose that $a,b\in S$ such that $\blokk a\tcon$ and $\blokk b\tcon$ are incomparable elements of the meet semilattice $\sla/\tcon$ such that $\blokk c\tcon\in \sla/\tcon$ is an upper bound of them. Let $a':=a\wedge c$ and $b':=b\wedge c$ in $\sla$. 
Since $\blokk a\tcon\leq \blokk c\tcon$, we have that $\blokk a\tcon =\blokk a\tcon\wedge \blokk c\tcon=\blokk {(a\wedge c)}\tcon=\blokk {a'}\tcon$, whence  $\pair a{a'}\in\tcon$. Similarly, $\pair b{b'}\in\tcon$. Since $a'\leq c$ and $b'\leq c$, \eqref{eqpbxCpBndxShR} implies the existence of $a'\vee b'\in \jplu$. Hence, by the definition of $\tcon$, we have that $\pair{a'\wedge b'}
{a'\vee b'}\in\tcon$. Since the $\tcon$-block $\blokk{(a'\wedge b')}\tcon$ is convex, $\pair{a'}{b'}\in\tcon$. Combining this with $\pair a{a'}\in\tcon$ and $\pair b{b'}\in\tcon$, we obtain that $\pair a b\in\tcon$. Hence, $\blokk a\tcon$ equals $\blokk b\tcon$, which contradicts their incomparability.
\end{proof}

Note that, in general, $\tcon=\tcon(S;\wedge)$ is not the smallest congruence of $\sla$ such that $\sla/\tcon$ is a tree; this is exemplified by the semilattice reduct of the four-element boolean lattice.

The proof of Theorem~\ref{thmmain} will be divided into several lemmas, some of them being interesting in themselves, and we are going to prove parts \eqref{thmmaina}--\eqref{thmmaind} separately.

Remember that, for a finite meet semilattice $S=\sla$, we use the notation $\plu S:=S\setminus\set{0}$. Then $\jplu$ is a partial algebra, which we call the \emph{partial join-semilattice} associated with $S$. By a \emph{partial subalgebra} of $\jplu$ we mean a subset $X$ of $\plu S$ such that whenever $x,y\in \plu S$ and $x\vee y$ is defined, then $x\vee y\in \plu S$. The set of all partial subalgebras of $\jplu$ form a lattice, which we denote by $\subplu$. For convenience, our convention is that $\emptyset\in \subplu$. The proof of Theorem~\ref{thmmain} relies on the following result of Freese and Nation~\cite{freesenation}.

\begin{lemma}[{Freese and Nation~\cite[Lemma 1]{freesenation}}]\label{lemmafrnat} For every finite meet semilattice $\sla$, the lattice $\Con(S;\wedge)$ is dually isomorphic to $\subplu$. In particular, we have that $|\Con(S;\wedge)|=|\subplu|$.
\end{lemma}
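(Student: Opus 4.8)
The plan is to exhibit the dual isomorphism explicitly, in the spirit of Freese and Nation. First observe that for any $\Theta\in\Con(S;\wedge)$ every $\Theta$-block is closed under $\wedge$: if $\pair x y\in\Theta$, then meeting with $x$ gives $\pair x{x\wedge y}\in\Theta$, so $x\wedge y$ lies in the block of $x$. As $S$ is finite, each block $B$ thus has a least element $\bigwedge B$. Define
\[
X_\Theta:=\set{a\in\plu S: a=\bigwedge(\blokk a\Theta)}\in\subplu,
\]
the set of nonzero block-minima; the goal is to show $\Theta\mapsto X_\Theta$ is an inclusion-reversing bijection from $\Con(S;\wedge)$ onto $\subplu$, whence in particular $|\Con(S;\wedge)|=|\subplu|$. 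The one computational tool used throughout is the following, obtained by meeting the relation $\pair c s\in\Theta$ with $a$: if $a=\bigwedge(\blokk a\Theta)$, $a\leq s$ and $\pair c s\in\Theta$, then $a\wedge c\in\blokk a\Theta$ and $a\wedge c\leq a$, so $a\leq c$.

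Next I would verify that $X_\Theta$ really is a partial subalgebra of $\jplu$. If $x,y\in X_\Theta$ and $x\vee y$ exists in $S$ (so automatically $x\vee y\in\plu S$), put $m:=\bigwedge(\blokk{(x\vee y)}\Theta)$. Applying the observation above with $s:=x\vee y$, $c:=m$ and $a:=x$, resp.\ $a:=y$, yields $x\leq m$ and $y\leq m$, hence $m\geq x\vee y$ and therefore $m=x\vee y$; that is, $x\vee y\in X_\Theta$.

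For the inverse map, given $X\in\subplu$, I would set $\sigma_X(s):=\bigvee\bigl(\set{t\in X: t\leq s}\cup\set 0\bigr)$ for $s\in S$; this join exists since the elements in question have the common upper bound $s$, and it lies in $X\cup\set 0$ because $X$ is closed under existing joins. One checks at once that $\sigma_X$ is idempotent, that $\sigma_X(s)\leq s$, that for each $t\in X$ one has $t\leq s\iff t\leq\sigma_X(s)$, and that $\sigma_X$ fixes every element of $X$. Let $\Theta_X$ be the kernel of $\sigma_X$, i.e.\ $\pair s{s'}\in\Theta_X$ iff $\sigma_X(s)=\sigma_X(s')$. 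I would then check, in order: (a) $\Theta_X\in\Con(S;\wedge)$, because $\set{t\in X: t\leq s\wedge u}=\set{t\in X: t\leq\sigma_X(s)\text{ and }t\leq u}$ depends on $s$ only through $\sigma_X(s)$, so $\sigma_X(s)=\sigma_X(s')$ forces $\sigma_X(s\wedge u)=\sigma_X(s'\wedge u)$; (b) $\sigma_X(s)=\bigwedge(\blokk s{\Theta_X})$ for all $s$, whence $X_{\Theta_X}$ equals the image of $\sigma_X$ intersected with $\plu S$, which is exactly $X$; and (c) conversely, the observation of the first paragraph gives $\bigwedge(\blokk s\Theta)=\bigvee(\set{t\in X_\Theta: t\leq s}\cup\set 0)$, so the block-minima of $\Theta$ — and hence $\Theta$ itself — are recovered from $X_\Theta$, i.e.\ $\Theta\mapsto X_\Theta$ is injective. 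Finally, monotonicity in both directions is routine: passing to a larger congruence only merges blocks, and the minimum of a merged block is one of the old minima, so $X_{\Theta_2}\subseteq X_{\Theta_1}$; and $X_1\subseteq X_2$ makes $\sigma_{X_1}$ factor through $\sigma_{X_2}$, so $\Theta_{X_2}\subseteq\Theta_{X_1}$. Thus $\Theta\mapsto X_\Theta$ is a dual isomorphism, giving the asserted cardinality equality as well.

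The step I expect to need the most care is (a), the congruence property of $\Theta_X$: since $\sla$ carries no distributivity, one cannot hope for $\sigma_X(s\wedge u)=\sigma_X(s)\wedge\sigma_X(u)$, and the correct substitute is the weaker statement that $\set{t\in X: t\leq s}$ is determined by $\sigma_X(s)$ alone. A related pitfall to avoid is to treat $\Theta$ as though it respected joins where they exist — it need not, as the four-element boolean semilattice already shows — so all the join-theoretic information carried by $\Theta$ must be routed through $X_\Theta$ via the first-paragraph observation rather than applied to $\Theta$ directly.
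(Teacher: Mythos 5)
Your argument is correct, but note that the paper does not prove this lemma at all: it is imported verbatim as Lemma~1 of Freese and Nation, with only a remark reconciling $\subplu$ (which contains $\emptyset$) with their $\Sub(S;\vee,0)$. What you have written is therefore a self-contained reconstruction of the Freese--Nation correspondence rather than an alternative to anything in this paper. Your two maps are the standard ones: $\Theta\mapsto X_\Theta$ (nonzero block minima, which exist because blocks are meet-closed and $S$ is finite) and $X\mapsto\ker\sigma_X$ for the closure-type retraction $\sigma_X(s)=\bigvee(\set{t\in X:t\leq s}\cup\set 0)$. All the steps check out: the ``computational tool'' $\pair{a\wedge c}{a\wedge s}\in\Theta$ correctly yields $a\leq c$ when $a$ is a block minimum below $s$ and $\pair c s\in\Theta$; it gives both that $X_\Theta$ is closed under existing joins and that $\bigwedge(\blokk s\Theta)=\sigma_{X_\Theta}(s)$, which is exactly the injectivity statement; the equality $\set{t\in X:t\leq s\wedge u}=\set{t\in X:t\leq\sigma_X(s),\ t\leq u}$ is the right substitute for distributivity and does establish that $\ker\sigma_X$ is a congruence; and both directions of antitonicity are sound. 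The only point worth making explicit is that $\sigma_X(s)\in X\cup\set 0$ requires closure of $X$ under joins of arbitrary finite bounded subsets, not just pairs; this follows by induction since every partial join of a bounded set is again bounded, but it deserves a sentence. Your closing caveat --- that congruences of $\sla$ need not respect existing joins, as the four-element boolean semilattice shows --- is exactly the point the paper itself makes when observing that $\tcon$ need not be the least congruence with tree quotient.
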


Note that Freese and Nation~\cite{freesenation} uses $\Sub(S;\vee,0)$, which does not contain the emptyset, but the  natural isomorphism from $\subplu$ onto $\Sub(S;\vee,0)$, defined by $X\mapsto X\cup\set 0$,  allows us to cite their result in the above form.
The following lemma is almost trivial; having no reference at hand, we are going to present a short proof. As usual, \emph{intervals} are nonempty subsets of the form $[a,b]:=\set{x: a\leq x\leq b}$. The \emph{principal ideal} and the \emph{principal filter} generated by an element $a\in S$ are denoted by $\ideal a=\set{x\in S: x\leq a}$ and $\filter a=\set{x\in S: a\leq x}$, respectively.  Meet-closed convex subsets are \emph{convex subsemilattices}. 
A subsemilattice is \emph{nontrivial} if it consists of at least two elements.

\begin{lemma}\label{lemmaCnGrtS} Let $X$ be a nontrivial convex subsemilattice of  a finite semilattice $\sla$, and denote the smallest element of $X$ by $u:=\bigwedge X$. Then the following two conditions are equivalent. 
\begin{enumerate}[\upshape (a)]
\item\label{lemmaCnGrtSa} The equivalence $\Theta$ on $S$ whose only nonsingleton block is $X$ is a congruence of $\sla$.
\item\label{lemmaCnGrtSb} For all $c\in S\setminus \filter u$ and  every maximal element $v$ of $X$, we have that  $u\wedge c=v\wedge c$.
\end{enumerate}
\end{lemma}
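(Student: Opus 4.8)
The plan is to reduce \eqref{lemmaCnGrtSa} to the substitution rule for $\wedge$ and then to split into two cases according to whether the ``test element'' $c$ belongs to $\filter u$ or not; both implications will fall out of this analysis together with the convexity of $X$. Since $\Theta$ is an equivalence by construction and $\wedge$ is the only operation, and since $\wedge$ is commutative, \eqref{lemmaCnGrtSa} is equivalent to: $\pair{a\wedge c}{b\wedge c}\in\Theta$ whenever $\pair ab\in\Theta$ and $c\in S$. As $X$ is the only nonsingleton $\Theta$-block, the only nontrivial instance is $a,b\in X$ with $a\neq b$. Two elementary facts will be used repeatedly: first, every element of $X$ is $\geq u=\bigwedge X$, so any $t$ with $t<u$ lies outside $X$ and hence has $\blokk t\Theta=\set t$; second, by finiteness every element of $X$ lies below some maximal element of $X$.

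For \eqref{lemmaCnGrtSa}$\Rightarrow$\eqref{lemmaCnGrtSb}, assume $\Theta$ is a congruence, and let $c\in S\setminus\filter u$ and $v$ be a maximal element of $X$. From $u\not\leq c$ we get $u\wedge c<u$, so $u\wedge c\notin X$ and $\blokk{(u\wedge c)}\Theta=\set{u\wedge c}$. Applying the substitution rule to $\pair uv\in\Theta$ gives $\pair{u\wedge c}{v\wedge c}\in\Theta$, which, the first component being in a singleton block, forces $v\wedge c=u\wedge c$.

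For \eqref{lemmaCnGrtSb}$\Rightarrow$\eqref{lemmaCnGrtSa}, assume \eqref{lemmaCnGrtSb}, take $a,b\in X$ with $a\neq b$, and let $c\in S$ be arbitrary; I must show $\pair{a\wedge c}{b\wedge c}\in\Theta$. If $u\leq c$, then for every $x\in X$ we have $u=u\wedge c\leq x\wedge c\leq x$, so $x\wedge c\in[u,x]\subseteq X$ by convexity of $X$; hence $a\wedge c,b\wedge c\in X$ and the pair lies in $\Theta$. If $u\not\leq c$, choose maximal elements $v,w$ of $X$ with $a\leq v$ and $b\leq w$; then the sandwiches $u\wedge c\leq a\wedge c\leq v\wedge c=u\wedge c$ and $u\wedge c\leq b\wedge c\leq w\wedge c=u\wedge c$ (the last equalities by \eqref{lemmaCnGrtSb}) give $a\wedge c=u\wedge c=b\wedge c$, so $\pair{a\wedge c}{b\wedge c}$ is a diagonal pair and thus in $\Theta$.

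There is no real obstacle here; the only points needing care are that the elements of $X$ need not form a chain, so in the case $u\not\leq c$ one must invoke the hypothesis at a maximal element dominating the given element rather than at the element itself, and that in the case $u\leq c$ one genuinely uses convexity of $X$ rather than just the fact that $X$ is meet-closed.
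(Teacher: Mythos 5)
Your proposal is correct and follows essentially the same route as the paper: the reduction of the congruence property to the unary substitution rule via transitivity and commutativity, the case split on $c\in\filter u$ versus $c\notin\filter u$, the use of convexity in the first case, and the sandwich $u\wedge c\leq a\wedge c\leq v\wedge c=u\wedge c$ at a maximal element $v$ above $a$ in the second. No gaps.
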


\begin{proof}[Proof of Lemma~\textup{\ref{lemmaCnGrtS}}]
Assume \eqref{lemmaCnGrtSa} and let $c\notin \filter u$, and let $v$ be a maximal element of $X$. Then $c\notin \filter v$,   $u\nleq u\wedge c$, and  $u\nleq v\wedge c$. Hence, none of  $u\wedge c$ and  $v\wedge c$ is in $X$, but these two elements are collapsed by $\Theta$ since $\pair u v\in\Theta$. Thus, the definition of $\Theta$ gives that  $u\wedge c=v\wedge c$, proving that \eqref{lemmaCnGrtSa} implies \eqref{lemmaCnGrtSb}.

Next, assume \eqref{lemmaCnGrtSb}, and let $\Theta$ be defined as in \eqref{lemmaCnGrtSa}. First, we show that for all $x,y,z\in S$,
\begin{equation}
\text{if $\pair x y\in\Theta$, then $\pair{x\wedge z}{y\wedge z}\in \Theta$.}
\label{eqszmpRthGmX}
\end{equation}
This is trivial for $x=y$, so we can assume that $x,y\in X$. Pick maximal elements $x_1$ and $y_1$ in $X$ such that $x\leq x_1$ and $y\leq y_1$. First, let $z\in \filter u$.  Then, using the convexity of $X$,
$x\wedge z\in [u,x]\subseteq X$ and, similarly, $y\wedge z\in X$, whence we obtain that $\pair{x\wedge z}{y\wedge z}\in \Theta$ by the definition of $\Theta$.  Second, let $z\in S\setminus\filter u$. Then $x\wedge z$ belongs to the interval $[u\wedge z, x_1\wedge z]$, which is the singleton set $\set{u\wedge z}$ by  \eqref{lemmaCnGrtSb}. Hence, $x\wedge z=u\wedge z$. Similarly, $y\wedge z=u\wedge z$, whereby $\pair{x\wedge z}{y\wedge z}\in \Theta$.   Thus, \eqref{eqszmpRthGmX} holds. 

Finally, if $\pair{x_1}{y_1}\in\Theta$ and $\pair{x_2}{y_2}\in\Theta$, then we obtain from \eqref{eqszmpRthGmX} that both $\pair{x_1\wedge x_2}{y_1\wedge x_2}$ and
$\pair{y_1\wedge x_2}{y_1\wedge y_2}$ belong to $\Theta$, whereby
transitivity gives that $\pair{x_1\wedge x_2}{y_1\wedge y_2}\in \Theta$. 
Consequently, $\Theta$ is a congruence and \eqref{lemmaCnGrtSb} implies \eqref{lemmaCnGrtSa}.
\end{proof}

The \emph{powerset} of a set $A$ will be denoted by $\pset A=\set{X: X\subseteq A}$. In the rest of the paper, 
\begin{equation}
\parbox{8cm}{
$n\geq2$ denotes a natural number,   $\sla$ will stand for an $n$-element meet semilattice, and we will also use the notation
\hfill$k:=|\Con(S;\wedge)|=|\subplu|$;}
\label{eqwhatisk}
\end{equation}
here the second equality is valid by Lemma~\ref{lemmafrnat}.

\begin{proof}[Proof of Theorem~\textup{\ref{thmmain}\eqref{thmmaina}}] 
Since $|\plu S|=n-1$,  $\plu S$ has at most $2^{n-1}$ subsets, whereby 
$|\Con(S;\wedge)|=k\leq |P(\plu S)|=2^{n-1}$, as required. If $\sla$ is a tree semilattice, then $x\vee y$ is defined only if $x$ and $y$ form a comparable pair of $\plu S$, whence $x\vee y\in\set{x,y}$. Hence, every subset of $\plu S$ belongs to $\subplu$, and so $k=|\subplu|=|\pset{\plu S}|=2^{n-1}$. 
If $S$ is not a tree semilattice, then there is a pair $\pair a b$ of incomparable elements of $\plu S$ with an upper bound.  By \eqref{eqpbxCpBndxShR}, 
$a\vee b$ is defined in $\jplu$. Hence, $\set{a,b}\notin \subplu$ and so 
$k=|\subplu|<|\pset{\plu S}|=2^{n-1}$. This completes the proof of part \eqref{thmmaina}.
\end{proof}

By an \emph{upper bounded two-element antichain}, abbreviated as   \emph{\ubta}, we mean a two-element subset $\set{x, y}$ of a finite meet semilattice $\sla$ such that $x\parallel y$ and $\filter x\cap\filter y\neq\emptyset$. By \eqref{eqpbxCpBndxShR}, every \ubta{} $\set{x, y}$ has a join in $\plu S$ but this join is outside $\set{x, y}$. Therefore, 
\begin{equation}
\text{$\subplu$ contains no \ubta.}
\label{eqtxtnoubta}
\end{equation}
Besides \eqref{eqtxtnoubta}, the importance of \ubta{}s is explained by the following lemma.

\begin{lemma}\label{lemmauGbtThnNZcls}
 Let $X$ be a convex subsemilattice of  a finite semilattice $\sla$  such that $|X|\geq 2$ and $X\times X\subseteq\tcon$; see \eqref{proptrCngR}. If $X$ contains all \ubta{}s $\set{p,q}$ of $\sla$ together with their joins $p\vee q$, then $\sla$ is a quasi-tree semilattice and its nucleus is $X$.
\end{lemma}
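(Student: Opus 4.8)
The plan is to identify $X$ as the unique nonsingleton block of $\tcon=\tcon(S;\wedge)$. Write $u:=\bigwedge X$ for the least element of the finite convex subsemilattice $X$, and let $\Theta$ be the equivalence relation on $S$ whose only nonsingleton block is $X$; this is legitimate since $|X|\geq2$. The argument splits into three steps: prove that $\Theta$ is a congruence, prove that $\tcon\subseteq\Theta$, and then combine these with the hypothesis $X\times X\subseteq\tcon$ to pin down the block structure of $\tcon$.

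For the first step I would apply Lemma~\ref{lemmaCnGrtS} and verify its condition \eqref{lemmaCnGrtSb}. So let $c\in S\setminus\filter u$ and let $v$ be a maximal element of $X$; the task is to show $u\wedge c=v\wedge c$. Since $u\leq v$ already gives $u\wedge c\leq v\wedge c$, it suffices to prove $v\wedge c\leq u$. If $u=0$ then $\filter u=S$ and no such $c$ exists, so assume $u\neq0$; and if $v\wedge c=0$ then $u\wedge c\leq v\wedge c=0$ settles the claim, so assume $v\wedge c\neq0$, i.e.\ $u,v\wedge c\in\plu S$. Suppose for contradiction that $v\wedge c\not\leq u$. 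We cannot have $u\leq v\wedge c$ either, as that would force $u\leq c$, contradicting $c\notin\filter u$; hence $u\parallel v\wedge c$. But $u\leq v$ and $v\wedge c\leq v$, so $v$ is a common upper bound of $u$ and $v\wedge c$, and therefore $\set{u,v\wedge c}$ is a \ubta{} of $\sla$. By hypothesis $\set{u,v\wedge c}\subseteq X$, whence $u=\bigwedge X\leq v\wedge c$, contradicting $u\parallel v\wedge c$. This proves \eqref{lemmaCnGrtSb}, so $\Theta\in\Con(S;\wedge)$.

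For the second step, recall from \eqref{eqtrCnGr} and \eqref{eqpbxCpBndxShR} that $\tcon$ is generated by the pairs $\pair{p\wedge q}{p\vee q}$ where $\set{p,q}$ ranges over the \ubta{}s of $\sla$. For such a pair, the hypothesis gives $p,q,p\vee q\in X$, and $p\wedge q\in X$ because $X$ is meet-closed; hence $\pair{p\wedge q}{p\vee q}\in\Theta$. Since $\Theta$ is a congruence containing all the generators of $\tcon$, we conclude $\tcon\subseteq\Theta$.

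Finally, $X\times X\subseteq\tcon\subseteq\Theta$. The first inclusion forces all of $X$ into a single $\tcon$-block $B$, and the second inclusion forces $B$ into a single $\Theta$-block, which can only be $X$ since $|B|\geq|X|\geq2$; thus $B=X$. Every other $\tcon$-block is disjoint from $X=B$, hence contained in $S\setminus X$, hence --- lying inside a $\Theta$-block, all of which outside $X$ are singletons --- is itself a singleton. Therefore $\tcon$ has exactly one nonsingleton block, namely $X$, so $\sla$ is a quasi-tree semilattice with nucleus $X$, as claimed. The only step requiring genuine care is the verification of Lemma~\ref{lemmaCnGrtS}\eqref{lemmaCnGrtSb}: the point is precisely that a maximal element $v$ of $X$ supplies the common upper bound needed to recognize $\set{u,v\wedge c}$ as a \ubta, after which the hypothesis on $X$ delivers the contradiction.
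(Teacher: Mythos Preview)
Your proof is correct and follows essentially the same architecture as the paper's: define $\Theta$ with unique nonsingleton block $X$, verify Lemma~\ref{lemmaCnGrtS}\eqref{lemmaCnGrtSb} via the same \ubta{} argument (the paper phrases the contradiction as ``the \ubta{} $\set{u,v\wedge c}$ is not a subset of $X$'' while you phrase it as ``$v\wedge c\in X$ forces $u\leq v\wedge c$'', which is the same observation), and then show $\tcon\subseteq\Theta$ by checking the generators from \eqref{eqtrCnGr}. The one noteworthy streamlining is that the paper interposes a direct paragraph proving that $\sla/\Theta$ is a tree before establishing $\Theta=\tcon$, whereas you go straight from the sandwich $X\times X\subseteq\tcon\subseteq\Theta$ to the block structure of $\tcon$; your route is shorter, and the paper's tree-quotient paragraph is in fact logically redundant once $\Theta=\tcon$ is known, since Proposition~\ref{proptrCngR} already guarantees that $\sla/\tcon$ is a tree and Definition~\ref{defQTRsL} only concerns the blocks of $\tcon$.
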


\begin{proof}[Proof of Lemma~\textup{\ref{lemmauGbtThnNZcls}}]
Denote the smallest element of $X$ by $u:=\bigwedge X$.
Let $\Theta$ be the equivalence relation on $S$ with $X$ as the only nonsingleton block of $\Theta$. In order to prove that $\Theta\in\Con(S;\wedge)$, assume that $c\in S\setminus \filter u$ and $v$ is a maximal element of $X$. For the sake of contradiction, suppose that $u\wedge c\neq v\wedge c$, which means that $u\wedge c < v\wedge c$. If we had that $v\wedge c\leq u$, then  $v\wedge c =u\wedge (v\wedge c)=(u\wedge v)\wedge c=u\wedge c$ would be a contradiction. Thus, $v\wedge c\nleq u$.  
On the other hand, $u\nleq v\wedge c$ since $u\nleq c$, whereby 
 $u \parallel v\wedge c$. Since $v$ is a common upper bound of $u$ and $v\wedge c$, we obtain that $\set{u,v\wedge c}$ is a \ubta{}. This is a contradiction since $c\notin\filter u$ implies that $u\nleq v\wedge c$, whence 
the \ubta{}  $\set{u,v\wedge c}$ is not a subset of $X$. Hence, $u\wedge c = v\wedge c$, and it follows from Lemma~\ref{lemmaCnGrtS} that $\Theta\in\Con(S;\wedge)$. 

Next, in order to show that $\sla/\Theta$ is a tree, suppose the contrary. 
Then there are two incomparable $\Theta$-blocks $\tbl x$ and $\tbl y$ that have an upper bound $\tbl z$. Since $u\in X$  and all other $\Theta$-blocks are singletons, every $\Theta$-block has a smallest element. This fact allows us to assume that each of $x$, $y$, and $z$ is the least element of its $\Theta$-block. Since $\tbl x\leq \tbl z$, we have that 
$\tbl x=\tbl x\wedge \tbl z=\tbl{(x\wedge z)}$, that is, $\pair x{x\wedge z}\in\Theta$. But the least element of $\tbl x$ is $x$, whence $x=x\wedge z$, that is, $x\leq z$. We obtain similarly that $y\leq z$, that is, $\set{x,y}$ has an upper bound, $z$. 
Since $x\wedge y=x$ would imply that $\tbl x\wedge \tbl y=\tbl{(x\wedge y)}=\tbl x$, contradicting that $\set{\tbl x,\tbl y}$ is an antichain, we obtain that $x\nleq y$. We obtain $y\nleq x$ similarly. Thus, $\set{x,y}$ is a \ubta{}, whereby
 $\set{x,y}\subseteq X$. But then $\tbl x=X=\tbl y$, contradicting the initial assumption that these two $\Theta$-blocks are incomparable. Therefore, $X/\Theta$ is a tree. Hence, in order to complete the proof, we need to show that $\Theta=\tcon$. Since $X\times X\subseteq \tcon$, the inclusion $\Theta\subseteq \tcon$ is clear. In order to see the converse inclusion, let $\pair {a\wedge b}{a\vee b}$ be a pair occurring in \eqref{eqtrCnGr}. Then $\set{a,b}$ is a \ubta{}, so $\set{a,b}\subseteq X$ and, by the assumptions of the lemma, both $a\vee b$ and $a\wedge b$ belong to $X$. Hence, the pairs in \eqref{eqtrCnGr}  are collapsed by $\Theta$ and we conclude that $\tcon\subseteq \Theta$. Hence, $\Theta=\tcon$, and 
the proof of Lemma~\ref{lemmauGbtThnNZcls} is complete.
\end{proof}

\begin{lemma}\label{lemmabtggQPx} If 
$\sla$ from \eqref{eqwhatisk} contains exactly one \ubta{},
then  $\sla$ is a quasi-tree semilattice and its nucleus is the four-element boolean lattice.
\end{lemma}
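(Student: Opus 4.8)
The plan is to reduce the statement to Lemma~\ref{lemmauGbtThnNZcls} by exhibiting the nucleus explicitly. Let $\set{a,b}$ be the unique \ubta{} of $\sla$; thus $a\parallel b$ and, by \eqref{eqpbxCpBndxShR}, the join $a\vee b$ exists in $\jplu$. I would set $X:=[a\wedge b,\,a\vee b]$ and check that $X$ meets all hypotheses of Lemma~\ref{lemmauGbtThnNZcls} while being the four-element boolean lattice.

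First I would identify $X$ exactly, claiming $X=\set{a\wedge b,\,a,\,b,\,a\vee b}$ with these four elements pairwise distinct. The inclusion $\supseteq$ and distinctness are immediate from $a\parallel b$, which gives $a\wedge b<a$, $a\wedge b<b$, $a<a\vee b$, $b<a\vee b$ and $a\neq b$. For $\subseteq$, take $y\in X$. If $y\parallel a$, then $\set{y,a}$ is a \ubta{} (since $y,a\leq a\vee b$), so $\set{y,a}=\set{a,b}$ and hence $y=b$; symmetrically, $y\parallel b$ forces $y=a$. If $y$ is comparable to both $a$ and $b$, then $a\parallel b$ excludes $a\leq y\leq b$ and $b\leq y\leq a$, while $y\leq a$ and $y\leq b$ give $y\leq a\wedge b$, hence $y=a\wedge b$, and $a\leq y$ and $b\leq y$ give $a\vee b\leq y$, hence $y=a\vee b$. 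This proves the claim, so $\tuple{X;\wedge}$ is (the meet reduct of) the four-element boolean lattice. Moreover, $X$ is a convex subsemilattice of $\sla$ with $|X|=4\geq2$, since any interval $[c,d]$ is convex and, for $x,y\in[c,d]$, satisfies $c\leq x\wedge y\leq d$.

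Second I would verify $X\times X\subseteq\tcon$. Because $\set{a,b}$ is the only \ubta{} of $\sla$, \eqref{eqpbxCpBndxShR} shows that the generating set in \eqref{eqtrCnGr} is the single pair $\pair{a\wedge b}{a\vee b}$, which therefore lies in $\tcon$; meeting this pair with $a$ and with $b$ yields $\pair{a\wedge b}{a}\in\tcon$ and $\pair{a\wedge b}{b}\in\tcon$, so all four elements of $X$ are $\tcon$-equivalent.

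Finally, $X$ contains the only \ubta{} $\set{a,b}$ together with its join $a\vee b\in X$, so Lemma~\ref{lemmauGbtThnNZcls} applies and gives that $\sla$ is a quasi-tree semilattice whose nucleus is $X$, the four-element boolean lattice, as desired. I do not anticipate a real obstacle; the only delicate point is the case analysis identifying $X$ with $\set{a\wedge b,a,b,a\vee b}$, where one must use ``exactly one \ubta{}'' precisely to rule out every element lying strictly between $a\wedge b$ and $a\vee b$ other than $a$ and $b$.
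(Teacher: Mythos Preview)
Your proof is correct and follows essentially the same route as the paper's: set $X=[a\wedge b,a\vee b]$, use the uniqueness of the \ubta{} to identify $X$ with $\set{a\wedge b,a,b,a\vee b}$ via the same case analysis, verify $X\times X\subseteq\tcon$, and invoke Lemma~\ref{lemmauGbtThnNZcls}. The only cosmetic difference is that the paper obtains $X\times X\subseteq\tcon$ from convexity of $\tcon$-blocks (via \eqref{eqtxtconblconVex}) rather than by meeting the generator with $a$ and $b$; both arguments are equally short.
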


\begin{proof}[Proof of Lemma~\textup{\ref{lemmabtggQPx}}]
Let us denote by $\set{a,b}$ the unique \ubta{} of $\sla$.
Let $v:=a\vee b$, which exists by \eqref{eqpbxCpBndxShR}, and let $u:=a\wedge b$. Then $L:=[u,v]$ contains every \ubta. 
Since $\pair u v\in\tcon$ by \eqref{eqtrCnGr} and the $\tcon$-blocks are convex, $L\times L\subseteq \tcon$.
With reference to Lemma~\ref{lemmauGbtThnNZcls}, it suffices to show that $L$ is the four-element boolean lattice. In fact, it suffices to show that $L\subseteq\set{u,a,b,v}$ since the converse inclusion is evident. Suppose the contrary, and let $x\in L\setminus \set{u,a,b,v}$. If $x\parallel a$, then $\set{a,x}$ is a 
\ubta{} (with upper bound $v$) but it is distinct from $\set{a,b}$, which contradicts the fact that $\set{a,b}$ is the only \ubta. Hence, $a$ and $x$ and comparable. We obtain similarly that $b$ and $x$ are comparable. If $x\leq a$ and $x\leq b$, then $u\leq x\leq a\wedge b=u$ leads to $x=u\in L$, which is not the case. We obtain dually that 
the conjunction of  $x\geq a$ and $x\geq b$ is impossible. Hence, 
$a\leq x \leq b$ or $b\leq x\leq a$, contradicting that $\set{a,b}$ is an antichain. This shows that $L\subseteq\set{u,a,b,v}$, completing the proof of Lemma~\ref{lemmabtggQPx}.
\end{proof}

\begin{proof}[Proof of Theorem~\textup{\ref{thmmain}\eqref{thmmainb}}]
Assume that $k<2^{n-1}$; see \eqref{eqwhatisk}. By Theorem~\ref{thmmain}\eqref{thmmaina},  $S$ is not a tree. Hence, $n=|S|\geq 4$.  Since $|\subplu|=k<2^{n-1}=|\pset{\plu S}|$, not every subset of $\plu S$ is $\vee$-closed. Thus,  we can pick $a,b\in \plu S$ such that $a\parallel b$ and $a\vee b$ exists in $\jplu$. Since $|\plu S\setminus\set{a,b,a\vee b}|=2^{n-4}$, there are $2^{n-4}$ subsets of $\plu S$ that contain $a$, $b$, but not $a\vee b$; these subsets do not belong to $\subplu$. Thus, $k\leq 2^{n-1}- 2^{n-4}=32\cdot 2^{n-6}-4\cdot 2^{n-6}=28\cdot 2^{n-6}$, proving the first half of \eqref{thmmainb}. 

Next, assume that  $k=28\cdot 2^{n-6}$ and choose $a$ and $b$ as above.  There are $2^{n-4}=4\cdot 2^{n-6}$ subsets of $\plu S$ containing $a$ and $b$, but not containing $a\vee b$; these subsets are not in $\jplu$. Thus, all the remaining  $32\cdot 2^{n-6} - 4\cdot 2^{n-6} = 28\cdot 2^{n-6}$ subsets belong to $\jplu$ since $k=28\cdot 2^{n-6}$. 
In particular, for every \ubta{} $\set{x,y}$, we have that $\set{x,y}\neq\set{a,b}\Rightarrow \set{x,y}\in\subplu$. This implication and \eqref{eqtxtnoubta} yield that $\set{a,b}$ is the only \ubta{} in $\sla$.
Thus, it follows from Lemma~\ref{lemmabtggQPx} that $\sla$ is a quasi-tree semilattice of the required form. 

Conversely, assume that $\sla$ is of the form described in \ref{thmmain}\eqref{thmmainb}. Choosing  the notation so that its nucleus is $\set{a\wedge b, a,b,a\vee b}$, the only \ubta{} is $\set{a,b}$, whence a subset $X$ of $\plu S$ is not in $\subplu$ iff $a,b\in X$ but $a\vee b\notin X$. 
There are $2^{n-4}=4\cdot 2^{n-6}$ such subsets $X$, and we obtain that 
$k=|\subplu|=|\pset{\plu S}| -4\cdot 2^{n-6}=32\cdot2^{n-6}-4\cdot 2^{n-6}=28\cdot 2^{n-6}$, as required.
This completes the proof of  Theorem~\textup{\ref{thmmain}\eqref{thmmainb}}.
\end{proof}

\begin{lemma}\label{lemmabtwWsPtZ} If 
$\sla$ from \eqref{eqwhatisk} contains exactly two \ubta{}s,
$\set{ a, b}$ and $\set{ c, b}$ such that $a<c$, then  $\sla$ is a quasi-tree semilattice and its nucleus is the pentagon lattice $N_5$.
\end{lemma}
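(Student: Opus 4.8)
The plan is to follow the template of Lemma~\ref{lemmabtggQPx}, with one extra preliminary step: showing that the two \ubta{}s have a common meet and a common join, so that the set they span is an interval. Write $v_1:=a\vee b$ and $v_2:=c\vee b$ (both exist by \eqref{eqpbxCpBndxShR}), and $u_1:=a\wedge b$, $u_2:=c\wedge b$; from $a<c$ we get $u_1\leq u_2$ and $v_1\leq v_2$ at once.

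First I would prove $v_1=v_2$. If $v_1<v_2$, then $a\vee b$ and $c$ are incomparable: $a\vee b\geq c$ would (together with $a\vee b\geq b$) force $a\vee b\geq c\vee b=v_2$, hence $v_1=v_2$; and $a\vee b\leq c$ would force $b\leq c$; both are impossible. Since $v_2$ is a common upper bound of $a\vee b$ and $c$, the pair $\set{a\vee b,\,c}$ is a \ubta{}. One checks it differs from $\set{a,b}$ and from $\set{c,b}$ (using $a<c$, $a\parallel b$, $c\parallel b$), contradicting that $\sla$ has exactly two \ubta{}s. A dual argument, with $c$ serving as a common upper bound of $a$ and $c\wedge b$, shows $u_1=u_2$: if $u_1<u_2$, then $c\wedge b\parallel a$ (because $c\wedge b\leq a$ would give $c\wedge b\leq a\wedge b=u_1$ and $c\wedge b\geq a$ would give $a\leq b$), so $\set{a,\,c\wedge b}$ is a third \ubta{}. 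Put $u:=u_1=u_2$ and $v:=v_1=v_2$.

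Next, set $L:=[u,v]$, which is a convex subsemilattice with at least two elements. The pair $\pair{c\wedge b}{c\vee b}=\pair uv$ occurs among the generators in \eqref{eqtrCnGr}, and $\tcon$-blocks are convex, so $L\times L\subseteq\tcon$; moreover $L$ contains both \ubta{}s $\set{a,b}$, $\set{c,b}$ together with their common join $v$. Hence, by Lemma~\ref{lemmauGbtThnNZcls}, it suffices to prove that $L=\set{u,a,c,b,v}$ and that these five elements form $N_5$. That they are pairwise distinct and constitute a copy of $N_5$ is immediate from $u<a<c<v$, $u<b<v$, $a\parallel b$, and $c\parallel b$, each of which is forced by $a<c$, $a\parallel b$, and $c\parallel b$.

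The main part, and the step I expect to be the bulk of the work, is to rule out any further element of $L$, and here I would reuse the case analysis of Lemma~\ref{lemmabtggQPx}. Take $x\in L\setminus\set{u,a,c,b,v}$. If $x\parallel b$, then $\set{x,b}$ is a \ubta{} with upper bound $v$, so $x\in\set{a,c}$, a contradiction; thus $x$ is comparable with $b$. If $x<b$, then comparing $x$ with $a$: $x\parallel a$ makes $\set{x,a}$ a \ubta{} (bound $v$), forcing $x=b$; $x\leq a$ gives $x\leq a\wedge b=u$; and $x\geq a$ gives $a<b$ --- all impossible. Dually, if $x>b$, comparing $x$ with $c$ is impossible in each of the three sub-cases ($x\parallel c$ forces $x=b$; $x\geq c$ forces $x\geq c\vee b=v$; $x\leq c$ forces $b\leq c$). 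So no such $x$ exists, $L\cong N_5$, and Lemma~\ref{lemmauGbtThnNZcls} completes the proof. The only point needing care throughout is to verify at each step that the freshly produced \ubta{} really is distinct from $\set{a,b}$ and $\set{c,b}$; that is exactly where the hypotheses $a<c$, $a\parallel b$, $c\parallel b$ get used, and it is routine.
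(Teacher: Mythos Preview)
Your proof is correct and follows essentially the same approach as the paper: first establish that the two \ubta{}s share a common join $v$ and a common meet $u$ (the paper does this by showing $c\leq a\vee b$ and $b\wedge c\leq a$ directly, you do it by the equivalent symmetric argument $v_1=v_2$, $u_1=u_2$), then show $[u,v]=\set{u,a,b,c,v}\cong N_5$ by checking that any extra element would yield a third \ubta{}, and finally invoke Lemma~\ref{lemmauGbtThnNZcls}. The organization of your case analysis for ruling out extra elements differs slightly from the paper's, but the content is the same.
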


\begin{proof}[Proof of Lemma~\textup{\ref{lemmabtwWsPtZ}}] By \eqref{eqpbxCpBndxShR}, we can let $v:=a\vee b$. 
Since $v\leq c$ would lead to $b\leq c$, we have that $v\nleq c$. In particular, $v\neq c$, and we also have that $v\notin\set{a, b}$ since $\set{a, b}$ is an antichain. Thus, $\set{c,v}
$ is a two-element subset of $S$ and it is distinct from $\set{ a, b}$ and $\set{ a, c}$. Hence,  $\set{c,v}$ is not a \ubta. Since $b\vee c$, which exists by \eqref{eqpbxCpBndxShR}, is clearly an upper bound of 
$\set{c,v}$, it follows that $\set{c,v }$ is not an antichain. This fact and $v\nleq c$ yield that $c\leq v$. Thus, $v=a\vee b\leq c\vee b\leq v$, that is, $v=a\vee b=a\vee c$.  Next, let $u:=b\wedge c$; clearly, $u\notin \set{b,c}$. If we had that $a\parallel u$, then $\set{a,u}$ would be a third \ubta{} (with upper bound $c$), whence $a$ and $u$ are comparable elements.  
Since $a\leq u$ would lead to $a\leq b$ by transitivity, we have that $u\leq a$. Hence, $u\leq a\wedge b\leq c\wedge b=u$, and so $a\wedge b=u$. The equalities established so far show that $L:=\set{u,a,b,c,v}$ is a sublattice isomorphic to $N_5$. In order to show that $L$ is the interval $[u,v]$, suppose the contrary, and let $x\in [u,v]\setminus L$. If $x\parallel b$, then $\set{b,x}$ would be a third \ubta{} (with upper bound $v$), which would be a contradiction. If we had that $b<x<v$, then $\set{c,x}$ would be a \ubta, a contradiction. Similarly, $a<x<b$ gives that   $\set{a,x}$  is a \ubta, a contradiction again. Thus, $L=[u,v]$ is an interval of $S$. By \eqref{eqtrCnGr},
$\pair u v=\pair {a\wedge b}{a\vee b}\in\tcon$. Using that the $\tcon$-blocks are convex subsets, we obtain that 
$L\times L=[u,v]\times [u,v]\subseteq \tcon$. Thus, 
Lemma~\ref{lemmabtwWsPtZ} follows from 
Lemma~\ref{lemmauGbtThnNZcls}.
\end{proof}

\begin{proof}[Proof of Theorem~\textup{\ref{thmmain}\eqref{thmmainc}}]
Assume that $k<28\cdot 2^{n-6}$; see \eqref{eqwhatisk}. 

Note at this point that \emph{no equality} will be assumed for $k$ before  \eqref{eqssmqvltwSx}. Therefore the numbered equations, equalities, and statements \emph{before}  \eqref{eqssmqvltwSx} can be used later in the proof of \ref{thmmain}\eqref{thmmaind}.  

We introduce the following notation. 
For a \ubta{} $\set{a,b}$, let
\begin{equation}
U(a,b):=\set{X\in \pset{\plu S}: a\in X,\text{ }b\in X,\text{ but }a\vee b\notin X };
\label{eqUabNotation}
\end{equation}
it is subset of $\pset{\plu S}$; note that 
the existence of $a\vee b$ above follows from \eqref{eqpbxCpBndxShR}. 
By Theorem~\ref{thmmain}\eqref{thmmaina},  $\sla$ is not a tree, whereby it has at least one \ubta. If it had only one \ubta, then Lemma~\ref{lemmabtggQPx} and Theorem~\ref{thmmain}\eqref{thmmainb} would imply that $k=28\cdot 2^{n-6}$. Hence, $\sla$ has at least two \ubta{}s. Let $\set{a_1,b_1}$, $\set{a_2,b_2}$, \dots, $\set{a_t,b_t}$ be a repetition-free list of all  \ubta{}s of $\sla$; note that $t\geq 2$. Let $v_i:=a_i\vee b_i$ for $i=1,\dots, t$.

First, we show that for any $1\leq i<j\leq t$,
\begin{align}
\text{if } |\set{a_i,b_i,v_i,a_j,b_j,v_j}|&=6,\text{ then } 
k\leq 24.5 \cdot 2^{n-6},\label{eqtxtdzRnYmQl6}\\
\text{if }|\set{a_i,b_i,v_i,a_j,b_j,v_j}|&= 5,\text{ then }
k\leq 25\cdot 2^{n-6},\text{ and}\label{eqtxtdzRnYmQl5}\\
\text{if }|\set{a_i,b_i,v_i,a_j,b_j,v_j}|&= 4,\text{ then }
k\leq 26\cdot 2^{n-6}.\label{eqtxtdzRnYmQl4}
\end{align}
In order to show this, let $U_i:=U(a_i,b_i)$; see \eqref{eqUabNotation}. That is, $U_i$ is  the set of all those $X\in \pset{\plu S}$ that contain $a_i$ and $b_i$ but not $v_i$. Then $U_i\cup U_j$ is disjoint from $\subplu$, whereby the Inclusion-Exclusion Principle, $k=|\subplu|$, $|\pset{\plu S}|=32\cdot 2^{n-6}$,  and $|U_i|=|U_j|=4\cdot 2^{n-6}$ give that
\begin{align}
\subplu&\subseteq \pset{\plu S}\setminus (U_i\cup U_j)
\text{, and so } \label{alignzcvngRta}\\
k  &\leq 2^{n-6}\cdot
(32-4-4)+|U_i\cap U_j|=24\cdot 2^{n-6} + |U_i\cap U_j|, \label{alignzcvngRtb}\\
&\text{and if \eqref{alignzcvngRta} holds with equality in it, then so does \eqref{alignzcvngRtb}.}\label{alignzcvngRtc}
\end{align}
The  equality in \eqref{eqtxtdzRnYmQl6}
 implies that $|U_i\cap U_j|\leq 2^{n-1-6}= 2^{n-7}$. Hence,  
\eqref{eqtxtdzRnYmQl6} follows from \eqref{alignzcvngRtb}. Similarly, \eqref{eqtxtdzRnYmQl5} follows from  \eqref{alignzcvngRtb} and from the fact that the equality in  \eqref{eqtxtdzRnYmQl5} gives that $|U_i\cap U_j|\leq 2^{n-1-5}= 2^{n-6}$. (Note that  $U_i\cap U_j$ maybe empty; for example, if $v_i=a_j$, then $|U_i\cap U_j|=0$.) If we assume the equality in 
\eqref{eqtxtdzRnYmQl4}, then $|U_i\cap U_j|\leq 2^{n-1-4}= 2\cdot 2^{n-6}$ and  \eqref{alignzcvngRtb} imply the validity of 
\eqref{eqtxtdzRnYmQl4} similarly.
Furthermore, it is clear from this argument that strict inequalities lead to strict inequalities. For later reference, we formulate this as follows.
\begin{equation}
\parbox{7.7cm}
{If $|U_i\cap U_j|$ is \emph{strictly} less than $2^{n-7}$, $2^{n-6}$, and $2\cdot 2^{n-6}$, then $k$ is strictly less than 
$24.5\cdot 2^{n-6}$,  $25\cdot 2^{n-6}$, and $26\cdot  2^{n-6}$, respectively.
}
\label{eqpbxdzhTGbTswS}
\end{equation}

Next, we claim that for $1\leq i<j\leq t$,
\begin{equation}
\text{if $v_i\neq v_j$, then $|\set{a_i,b_i,v_i,a_j,b_j,v_j}|\geq 5$.}
\label{eqczTrWvMd}
\end{equation}
In order to show this, first we deal with the case where  $v_j\in\set{a_i,b_i}$ or $v_i\in\set{a_j,b_j}$. Let, say, $v_1=a_2$. Then $v_2>a_2=v_1>a_1$ and  $v_2>a_2=v_1>b_1$ yield that  
$|\set{a_1,b_1,v_1,v_2}|=4$. Clearly, $b_2\notin\set{a_2=v_1, v_2}$. If we had that $b_2\in \set{a_1,b_1}$, then $v_2=a_2\vee b_2$ would belong to $\ideal v_1$, contradicting $v_1<v_2$. Hence, 
the inequality in \eqref{eqczTrWvMd} holds in this case.
Second, assume that $v_j\notin\set{a_i,b_i}$ and $v_i\notin\set{a_j,b_j}$. Using also that $v_i\neq v_j$, we have that $|\set{a_i,b_i,v_i,v_j}|=4$. Since $v_i\notin\set{a_j,b_j}$, $\set{a_i,b_i}\neq \set{a_j,b_j}$, and, of course, $v_j\notin\set{a_j,b_j}$, at least one of $a_j$ and $b_j$ is not in $\set{a_i,b_i,v_i,v_j}$, and the required inequality in \eqref{eqczTrWvMd} holds again. This proves \eqref{eqczTrWvMd}. Clearly, 
\begin{equation}
\text{if $v_i = v_j$ but $i\neq j$, then $|\set{a_i,b_i,v_i,a_j,b_j,v_j}|\geq 4$,}
\label{eqczTrsgPqD}
\end{equation}
because $\set{a_i,b_i,a_j,b_j}$ has at least three elements and does not contain $v_i=v_j$, which is strictly larger than every element of $\set{a_i,b_i,a_j,b_j}$.
Observe that $k\leq 26\cdot 2^{n-6}$, the first half of \ref{thmmain}\eqref{thmmainc}, follows from 
\eqref{eqtxtdzRnYmQl6},  \eqref{eqtxtdzRnYmQl5},  \eqref{eqtxtdzRnYmQl4}, \eqref{eqczTrWvMd}, and \eqref{eqczTrsgPqD},
 because $t\ge 2$ implies the existence of a pair $\pair i j$ such that $1\leq i<j\leq t$.

Next, strengthening \eqref{eqtxtdzRnYmQl5}, we are going to show that for any $1\leq i<j\leq t$,
\begin{equation}
\text{if } |\set{a_i,b_i,v_i,a_j,b_j,v_j}|=5 \text{ and }t\geq 3,\text{ then } 
k < 25 \cdot 2^{n-6}.\label{eqtxtstssPrzgdRnY}
\end{equation}
Since $t\geq 3$, we can pick an $m\in\set{1,\dots,t}\setminus\set{i,j}$. For the sake of contradiction,
\begin{equation}
\text{suppose that $|\set{a_i,b_i,v_i,a_j,b_j,v_j}|=5$ but $k\geq 25 \cdot 2^{n-6}$.}
\label{eqtxtIndRssmptn} 
\end{equation}
It follows from \eqref{eqpbxdzhTGbTswS} and \eqref{eqtxtIndRssmptn} that 
\begin{equation}
|U_i\cap U_j|=2^{n-6}.
\label{eqnbThslTVvgp}
\end{equation}
By \eqref{eqtxtdzRnYmQl6} and \eqref{eqtxtIndRssmptn}, none of $\set{a_i,b_i,v_i,a_m,b_m,v_m}$ and $\set{a_j,b_j,v_,a_m,b_m,v_m}$
consists of six elements. Using   \eqref{eqczTrWvMd} and \eqref{eqczTrsgPqD}, each of these sets consists of four or five elements. Hence,
\begin{equation}
|U_i\cap U_m|\leq 2\cdot 2^{n-6}\,\,\text{ and }\,\, |U_j\cap U_m|\leq 2\cdot 2^{n-6}.
\label{eqnbsklTVvqfjJ}
\end{equation}
We also need the following observation.
\begin{equation}
\parbox{7.5cm}{If $U_i\cap U_j\neq\emptyset$, $U_i\cap U_m\neq\emptyset$, and $U_j\cap U_m\neq\emptyset$, then $U_i\cap U_j\cap U_m\neq\emptyset$.}
\label{eqpbxiFthrmTlnGbwZTs}
\end{equation}
To show \eqref{eqpbxiFthrmTlnGbwZTs}, assume that its premise holds. If $\set{a_i,b_i,a_j,b_j,a_m,b_m}$ is disjoint from $\set{v_i,v_j,v_m}$, then $U_i\cap U_j \cap U_m$ contains $\set{a_i,b_i,a_j,b_j,a_m,b_m}$ and so it is nonempty. Otherwise, since $a$--$b$ symmetry and since the subscripts in \eqref{eqpbxiFthrmTlnGbwZTs} play symmetric roles, we can assume that 
$a_i=v_j$. However, then $U_i\cap U_j=\emptyset$, contradicting the premise of \eqref{eqpbxiFthrmTlnGbwZTs}. Consequently, \eqref{eqpbxiFthrmTlnGbwZTs} holds. Based on the Inclusion-Exclusion Principle, as in  \eqref{alignzcvngRta}--\eqref{alignzcvngRtc}, and using
\eqref{eqnbThslTVvgp} and \eqref{eqnbsklTVvqfjJ}, 
we can compute as follows; the overline and the underlines below will serve as reference points.
\begin{align}
&\begin{aligned}
k\leq 2^{n-6}\cdot\bigl(32-(4+4+4)\bigr)&+(|U_i\cap U_j|+|U_i\cap U_m|+|U_j\cap U_j|) \cr
&- |U_i\cap U_j\cap U_m|,\quad\text{ and so}
\end{aligned}\label{eqsztfRml}\\
&\begin{aligned}
k&\leq 2^{n-6}\cdot(20+\overline 1+\underbar 2+\underbar 2) - |U_i\cap U_j\cap U_m|\cr
&=25\cdot 2^{n-6} - |U_i\cap U_j\cap U_m|.
\end{aligned}
\label{eqalignZtBjWsCpPSPw}
\end{align}
We know from \eqref{eqnbThslTVvgp} that $U_i\cap U_j\neq\emptyset$. The underlined numbers in \eqref{eqalignZtBjWsCpPSPw} come from \eqref{eqnbsklTVvqfjJ}. So  if at least one the intersections  $U_i\cap U_m$ and $U_j\cap U_m$ is empty, then at least one of the underlined numbers can be replaced 0 and  \eqref{eqalignZtBjWsCpPSPw} gives that $k<25\cdot 2^{n-6}$. Otherwise the subtrahend at the end of \eqref{eqalignZtBjWsCpPSPw} is positive by  \eqref{eqpbxiFthrmTlnGbwZTs}, and we obtain again that $k<25\cdot 2^{n-6}$.

This contradicts \eqref{eqtxtIndRssmptn} and proves the validity of \eqref{eqtxtstssPrzgdRnY}. Next, we assume that
\begin{equation}
k = 26\cdot 2^{n-6}.
\label{eqssmqvltwSx}
\end{equation}
It follows from  \eqref{eqtxtdzRnYmQl6},  \eqref{eqtxtdzRnYmQl5},  \eqref{eqczTrWvMd}, and \eqref{eqssmqvltwSx} that 
\begin{equation}
\text{all the $v_i$ are the same, so we can let $v:=v_1=\dots=v_t$.}
\label{eqtxtallviareThesame}
\end{equation} 
Hence, we conclude from  \eqref{eqtxtdzRnYmQl6},  \eqref{eqtxtdzRnYmQl5}, and  \eqref{eqczTrsgPqD} that, for any  $1\leq i<j\leq t$,
\begin{equation}
\parbox{7.8cm}{$|\set{a_i,b_i,a_j,b_j,v}|= 4$\text{ and so }$|U_i\cap U_j|\leq 2\cdot 2^{n-6}$ and 
$|\set{a_i,b_i}\cap\set{a_j,b_j}|=1$.}
\label{eqczsdfjsgTzNP}
\end{equation}

Next, we are going to prove that 
$t$, the number of \ubta{}s, equals $2$.
Suppose the contrary. Since now we have \eqref{eqczsdfjsgTzNP} instead of \eqref{eqnbThslTVvgp}, $\overline 1$ and $25$ in \eqref{eqalignZtBjWsCpPSPw} turns into $\underbar 2$ and $26$, 
respectively. These two modifications do not influence the paragraph following \eqref{eqalignZtBjWsCpPSPw}, and we conclude that the inequality in the modified \eqref{eqalignZtBjWsCpPSPw} is a strict one, that is, $k<26\cdot 2^{n-6}$. This contradicts \eqref{eqssmqvltwSx}, whence we conclude 
that there are exactly $t=2\,$ \ubta{}s. We know from \eqref{eqczsdfjsgTzNP} that they are not disjoint. So we can denote them by $\set{a,b}$ and $\set{c,b}$ where $|\set{a,b,c}|=3$. 
By \eqref{eqtxtallviareThesame}, 
$v=a\vee b=c\vee b$. We know from $t=2$ that $\set{a,c}$ is not a \ubta{}, whence $a$ and $c$ are comparable. So we can assume that $a<c$, and it follows from Lemma~\ref{lemmabtwWsPtZ} that $\sla$ is a quasi-tree semilattice of the required form.

Finally, assume that $\sla$ is a quasi-tree semilattice
and its nucleus is the pentagon $N_5=\set{u,a,b,c,v}$ with bottom $u$, top $v$, and $a<c$.
Let $U_1:=U(a,b)$ and $U_2:=U(c,b)$; see \eqref{eqUabNotation}.
 Since $\subplu=\pset{\plu S}\setminus(U_1\cup U_2)$,
\begin{equation*}
k=|\pset{\plu S}|-|U_1|-|U_2|+|U_1\cap U_2|=(32-4-4+2)\cdot 2^{n-6}=26\cdot 2^{n-6},
\end{equation*}
as required. This completes the proof of Theorem~\ref{thmmain}\eqref{thmmainc}.
\end{proof}

\begin{lemma}\label{lemmabowtie}
If $\sla$ from \eqref{eqwhatisk} contains  exactly two \ubta{}s, 
$\set{a,b}$ and $\set{b,c}$ such that $v_1:=a\vee b$ and $v_2:=b\vee c$ are incomparable, then $\sla$ is a quasi-tree semilattice and its nucleus is $F=\set{u:=a\wedge b\wedge c, a,b,c, v_1,v_2}$ given in Figure~\textup{\ref{figfour}}. 
\end{lemma}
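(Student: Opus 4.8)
The plan is to follow the template of the proof of Lemma~\ref{lemmabtwWsPtZ}: first determine all the order relations among $a,b,c,v_1,v_2$ and a suitable bottom element $u$, so that $L:=\set{u,a,b,c,v_1,v_2}$ is forced to be a copy of $F$; then verify that $L$ is a convex subsemilattice with $L\times L\subseteq\tcon$ that contains every \ubta{} together with its join; and finally apply Lemma~\ref{lemmauGbtThnNZcls} to $X:=L$.

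First I would record the relations forced by the hypotheses. The antichain conditions give $a\parallel b$ and $b\parallel c$; since $a<c$ (resp.\ $c<a$) would force $v_1\leq v_2$ (resp.\ $v_2\leq v_1$) by monotonicity of $\vee$, and $a=c$ is excluded, we get $a\parallel c$. Next I would show that $a\wedge b=b\wedge c=a\wedge c$, denoting this common value by $u$; the tool is the ``extra \ubta{}'' trick used throughout Section~\ref{sectproof}. For instance, if $a\wedge b\nleq c$ then $a\wedge b\parallel c$ (comparability the other way would give $c\leq a$), and $\set{a\wedge b,c}$ would be a \ubta{} with upper bound $v_2$, necessarily one of the two given \ubta{}s --- which is impossible. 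This yields $a\wedge b\leq b\wedge c$; the reverse inequality is symmetric, and $a\wedge c=u$ comes out the same way. Since $u=a\wedge b\leq c$, we have $u=a\wedge b\wedge c$, matching the statement.

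The crux is to show that the principal intervals of $S$ satisfy $[u,v_1]=\set{u,a,b,v_1}$ and $[u,v_2]=\set{u,b,c,v_2}$. To prove the first, take $x\in[u,v_1]$ and compare $x$ with $a$. If $x\parallel a$, then $\set{a,x}$ is a \ubta{} (upper bound $v_1$), hence equals $\set{a,b}$ or $\set{b,c}$, which forces $x=b$. If $u<x<a$, then $x\parallel b$ and $\set{x,b}$ is a \ubta{} with upper bound $v_1$, hence one of the two given ones, forcing $x\in\set{a,c}$ --- impossible. If $a<x<v_1$, then again $x\parallel b$ and the same \ubta{} argument gives a contradiction. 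So $[u,v_1]=\set{u,a,b,v_1}$, and symmetrically $[u,v_2]=\set{u,b,c,v_2}$. Consequently $a\nleq v_2$ and $c\nleq v_1$ (else $v_1,v_2$ would be comparable), $v_1\neq v_2$, all six listed elements are distinct, and $L=[u,v_1]\cup[u,v_2]$ is order-isomorphic to $F$.

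Finally I would check the three hypotheses of Lemma~\ref{lemmauGbtThnNZcls} for $L$. Meet-closedness reduces to a handful of meets: $a\wedge c=u$ is known, $c\wedge v_1$ and $a\wedge v_2$ lie in the two-element intervals $[u,c]$, $[u,a]$ and so equal $u$, and $v_1\wedge v_2$ lies in $[u,v_1]\cap[u,v_2]=\set{u,b}$ and is above $b$, hence equals $b$. For convexity of $L$ in $S$, one checks that for any $x,z\in L$ the interval $[x,z]$ is contained in $[u,v_1]$, in $[u,v_2]$, or in one of the trivial intervals $[u,a],[u,b],[u,c]$, so nothing outside $L$ can lie strictly between two elements of $L$. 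Since $\pair u{v_1}=\pair{a\wedge b}{a\vee b}\in\tcon$ and $\pair u{v_2}\in\tcon$ by \eqref{eqtrCnGr} and $\tcon$-blocks are convex, $L\times L\subseteq\tcon$; and $L$ visibly contains both \ubta{}s and their joins. Lemma~\ref{lemmauGbtThnNZcls} then gives that $\sla$ is a quasi-tree semilattice with nucleus $L=F$. I expect the main obstacle to be the bookkeeping in the previous paragraph and the convexity check: because $F$, unlike $N_5$, has no top element, $L$ is not an interval, so convexity must be verified pair-by-pair rather than obtained for free.
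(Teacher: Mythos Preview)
Your proposal is correct and follows essentially the same route as the paper's proof: establish the common meet $u=a\wedge b=b\wedge c=a\wedge c$ via the ``extra \ubta'' trick, determine the intervals $[u,v_1]$ and $[u,v_2]$, verify that $L=\set{u,a,b,c,v_1,v_2}$ is a convex meet-subsemilattice with $L\times L\subseteq\tcon$, and invoke Lemma~\ref{lemmauGbtThnNZcls}. The only differences are organizational: you first record $a\parallel c$ explicitly (the paper leaves this implicit in \eqref{eqmTsZrTF}), and you are more careful than the paper in checking the remaining meets $v_1\wedge v_2$, $a\wedge v_2$, $c\wedge v_1$, which the paper glosses over with ``It is a meet subsemilattice by \eqref{eqmTsZrTF}''.
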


\begin{proof}[Proof of Lemma~\textup{\ref{lemmabowtie}}] Let $u:=a\wedge b$. It is not in $\set{a,b}$. Since $b\ngeq c$, we have that $u\ngeq c$. Using that $v_2$ is an upper bound of $\set{u,c}$ and
$\set{u,c}$ is not a \ubta, it follows that $\set{u,c}$ is not an antichain. Hence, $u\leq c$, whence $u=a\wedge b\wedge c$. 
The set $\set{b,a\wedge c}$ cannot be an antichain, since otherwise it would be an additional \ubta{} with upper bound $v_1$. Since $b\nleq c$, we have that $b\nleq a\wedge c$. Hence, $
a\wedge c=a\wedge c\wedge b$. Summarizing the facts above and taking into account that $a$ and $c$ play a symmetric role, we have that 
\begin{equation}
u=a\wedge b=a\wedge b\wedge c=b\wedge c=a\wedge c.
\label{eqmTsZrTF}
\end{equation}
Let $M:=\set{a,b,c,u,v_1,v_2}$; we claim that 
\begin{equation}
\text{$M$ is a convex meet-subsemilattice of $\sla$.}
\label{eqtxtMisconvex}
\end{equation}
It is a meet subsemilattice by \eqref{eqmTsZrTF}.
For the sake of contradiction,suppose that $x\in S\setminus M$ such that $u< x < v_1$; the case $u< x < v_2$ would be similar since $a$ and $c$ play symmetric roles. 
Both $\set{a,x}$ and $\set{x,b}$ are have an upper bound, $v_1$. Hence, none of them is an \ubta{} since $x\notin M$. Hence, 
$a\leq x \leq b$, or $b\leq x\leq a$, or $a,b\in\ideal x$, or $a,b\in\filter x$. The first two alternatives are ruled out by $a\parallel b$. The third alternative leads to $v_1=a\vee b \leq x\leq v_1$, contradicting $x\notin M$. We obtain a contradiction from the fourth alternative dually by using $u$ instead of $v_1$. Thus, \eqref{eqtxtMisconvex} holds. It is clear by \eqref{eqmTsZrTF} that $M\cong F$. 

Since $\pair u {v_1}=\pair{a\wedge b}{a\vee b}$ occurs in \eqref{eqtrCnGr} and the $\tcon$-blocks are convex subsets, 
$\set{a,b,v_1,u}\subseteq \blokk u\tcon$. We obtain similarly that $\set{b,c, v_2,u}\subseteq \blokk u\tcon$, whence we have that $M\times M\subseteq \tcon$. Therefore, since $M$ contains both \ubta{}s and their joins, 
 Lemma~\ref{lemmauGbtThnNZcls} implies the validity of Lemma~\ref{lemmabowtie}.
\end{proof}

\begin{lemma}\label{lemmaNsix}
If $\sla$ from \eqref{eqwhatisk} contains  exactly three \ubta{}s, 
$\set{a_1,b}$, $\set{a_2,b}$, and $\set{a_3,b}$ such that $v:=a_1\vee b=a_2\vee b=a_3\vee b$ and $a_1<a_2<a_3$, then $\sla$ is a quasi-tree semilattice and its nucleus is $N_6=\set{u:=a_1\wedge b=a_2\wedge b=a_3\wedge b, a_1,a_2,a_3,v}$ given in Figure~\textup{\ref{figfour}}.
\end{lemma}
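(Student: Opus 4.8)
The plan is to reuse the argument from the proofs of Lemmas~\ref{lemmabtwWsPtZ} and~\ref{lemmabowtie}: isolate the six-element set $M$ destined to be the nucleus, show it is a convex subsemilattice contained in a single $\tcon$-block, and then invoke Lemma~\ref{lemmauGbtThnNZcls}.

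First I would put $u:=a_1\wedge b$ and verify that $a_2\wedge b=a_3\wedge b=u$. For $i\in\set{2,3}$ we have $u=a_1\wedge b\leq a_i\wedge b$ because $a_1\leq a_i$, and $a_i\wedge b\neq b$ because $\set{a_i,b}$ is an antichain. If $a_1$ and $a_i\wedge b$ were incomparable, then $\set{a_1,a_i\wedge b}$ would be a \ubta{} (with upper bound $a_i$) not containing $b$, hence a fourth one, a contradiction; so $a_1$ and $a_i\wedge b$ are comparable, and $a_1\leq a_i\wedge b$ is impossible since it forces $a_1\leq b$. Thus $a_i\wedge b\leq a_1$, so $a_i\wedge b\leq a_1\wedge b=u$ and therefore $a_i\wedge b=u$. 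Together with $u<a_1$ (else $a_1\leq b$), the given chain $a_1<a_2<a_3$, and $a_3<v$ (else $b\leq a_3$), this shows $M:=\set{u,a_1,a_2,a_3,v}$ has six elements, is $\wedge$-closed, and is isomorphic to $N_6$; see Figure~\ref{figfour}.

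Next I would show $M$ is convex. Since every element of $M$ lies in $[u,v]$ and $u,v\in M$, this is equivalent to $[u,v]\subseteq M$, so suppose $x\in S$ with $u<x<v$ and $x\notin M$. If $x\parallel b$, then $\set{x,b}$ is a \ubta{} with upper bound $v$, hence equal to one of $\set{a_1,b},\set{a_2,b},\set{a_3,b}$, hence $x\in\set{a_1,a_2,a_3}\subseteq M$, a contradiction. If $x<b$, then $x$ and $a_1$ must be comparable (else $\set{a_1,x}$ is a fourth \ubta{}, with upper bound $v$ and not containing $b$); as $a_1\leq x$ would give $a_1\leq b$, we get $x<a_1$, whence $x\leq a_1\wedge b=u$, contradicting $u<x$. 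The case $b<x$ is symmetric via $a_3$: $x$ and $a_3$ are comparable, $x\leq a_3$ is impossible, so $a_3<x$ and then $v=a_3\vee b\leq x$, contradicting $x<v$. Hence $[u,v]=M$ is convex.

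Finally, $\pair uv=\pair{a_1\wedge b}{a_1\vee b}$ is one of the pairs in \eqref{eqtrCnGr}; since $\tcon$-blocks are convex, the interval $M=[u,v]$ is contained in the $\tcon$-block of $u$, so $M\times M\subseteq\tcon$. As $M$ also contains all three \ubta{}s and their common join $v$, Lemma~\ref{lemmauGbtThnNZcls} gives that $\sla$ is a quasi-tree semilattice with nucleus $M\cong N_6$, which is the claim. I expect no real obstacle here: given Lemma~\ref{lemmauGbtThnNZcls}, the only slightly delicate point is the repeated, routine use of ``exactly three \ubta{}s'' to exclude stray incomparable pairs in the two case analyses.
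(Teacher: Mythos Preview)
Your proof is correct and follows essentially the same route as the paper's: establish $a_i\wedge b=u$ for all $i$, verify that $M\cong N_6$ is a convex subsemilattice equal to $[u,v]$, then invoke Lemma~\ref{lemmauGbtThnNZcls}. One trivial slip to fix: you write $M:=\set{u,a_1,a_2,a_3,v}$ and then claim it has six elements---you (like the paper's statement) dropped $b$ from the listing; include it.
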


\begin{proof}[Proof of Lemma~\textup{\ref{lemmaNsix}}]
Let $u:=a_3\wedge b$; clearly, $u\neq b$.  We are going to show that $M:=\set{u, a_1,a_2,a_3,v}$ is a subsemilattice isomorphic to $N_6$. 
Let $i\in\set{1,2}$.   Since $v$ is an upper bound of the set $\set{a_i, u}$, this set is not an antichain. Since $a_i\nleq b$, we have that   $a_i\nleq u$. Hence, $u<a_i$, and we obtain that 
$u\leq a_i\wedge b\leq a_3\wedge b=u$.  Thus, the meets in
$M$ are what they are required to be, and we conclude that $M\cong N_6$. Next, for the sake of contradiction, suppose that  $M$ is not a convex subset of $\sla$, and pick an element $x\in S\setminus M$ such that $u\leq x\leq v$. Since no more \ubta{} is possible, none of $a_1$, $a_2$, $a_3$, and $b$ is incomparable with $x$. If we had that $x\leq a_j$ for some $j\in\set{1,2,3}$, then $b\leq x$ would contradict
$a_j\nleq b$ while $x\leq b$ would lead to $u\leq x\wedge b\leq a_j\b=u$, a contradiction since $x\neq u\in M$. A dual argument, with $v$ instead of $u$, would lead to a contradiction if $a_j\leq x$. Hence, $M$ is a convex subsemilattice of $\sla$. 
Since $\pair u v=\pair{a_1\wedge b}{a_1\vee b}$ occurs in \eqref{eqtrCnGr} and the $\tcon$-blocks are convex subsets, $M\times M\subseteq \tcon$. Therefore, since $M$ contains all the three \ubta{}s and their common join,  Lemma~\textup{\ref{lemmaNsix}} follows from Lemma~\ref{lemmauGbtThnNZcls}.
\end{proof}

\begin{proof}[Proof of Theorem~\textup{\ref{thmmain}\eqref{thmmaind}}] We assume that $k=|\Con(S;\wedge)|<26\cdot 2^{n-6}$.
In the first part of the proof, we are going to focus on the required inequality, $k\leq 25\cdot 2^{n-6}$.

As it has been mentioned in the previous proof, any part of that proof before \eqref{eqssmqvltwSx} is applicable here, including the notation.
If  $|\set{a_i,b_i,v_i,a_j,b_j,v_j}|\geq 5$ 
or $v_i\neq v_j$ for some $1\leq i<j\leq t$, then 
the required $k\leq 25\cdot 2^{n-6}$
follows from \eqref{eqtxtdzRnYmQl6}, \eqref{eqtxtdzRnYmQl5}, and  \eqref{eqczTrWvMd}. Otherwise, we can assume that 
that $v:=v_1=v_2=\cdots=v_t$, and combining   \eqref{eqtxtdzRnYmQl5} and \eqref{eqczTrsgPqD}, we can also assume that 
$|\set{a_i,b_i,a_j,b_j,v}|=4$ for or  all $1\leq i<j\leq t$.
For later reference, we summarize this assumption as
\begin{equation}
\parbox{8.5cm}{$v:=v_1=v_2=\cdots=v_t$ and $|\set{a_i,b_i,a_j,b_j,v}|=4$,\\ whereby
$|\set{a_i,b_i}\cap\set{a_j,b_j}|=1$,   for  all $1\leq i<j\leq t$.  
}
\label{eqdzRpQbMnsS}
\end{equation}
We claim that
\begin{equation}
\parbox{8.5cm}{if $t\geq 3$, \eqref{eqdzRpQbMnsS}, and $\set{a_1,b_1}\cap\set{a_2,b_2}\cap\set{a_3,b_3}=\emptyset$, then $k\leq 24\cdot 2^{n-6}$.}
\label{eqtxtczhBmNQwx}
\end{equation}
The pairwise intersections in \eqref{eqdzRpQbMnsS}  are singletons, whereby the only way that the intersection in \eqref{eqtxtczhBmNQwx} is empty is that $|\set{a_1,b_1,a_2,b_2,a_3,b_3}|=3$. Hence,
$|U_i\cap U_j|=|U_1\cap U_2\cap U_3|=2\cdot 2^{n-6}$, and 
\eqref{eqtxtczhBmNQwx} follows from \eqref{eqsztfRml}. We also claim that
\begin{equation}
\parbox{8.5cm}{if $t\geq 3$, \eqref{eqdzRpQbMnsS}, and $\set{a_1,b_1}\cap\set{a_2,b_2}\cap\set{a_3,b_3}\neq\emptyset$, then $k\leq 25\cdot 2^{n-6}$.}
\label{eqtxtczkhBserkTlW}
\end{equation}
With the assumption made in \eqref{eqtxtczkhBserkTlW}, if we consider the same intersections as in the argument right after \eqref{eqtxtczhBmNQwx}, then we obtain that $|\set{a_1,b_1,a_2,b_2,a_3,b_3}|=4$. Hence, 
$|U_i\cap U_j|=2\cdot 2^{n-6}$ and 
$|U_1\cap U_2\cap U_3|=1\cdot 2^{n-6}$, and \eqref{eqtxtczkhBserkTlW} follows from \eqref{eqsztfRml}. Our next observation is that
\begin{equation}
\text{if $t\leq 2$ and \eqref{eqdzRpQbMnsS},  then $k\geq 26\cdot 2^{n-6}$.}
\label{eqtxsjtGynfNhZgsZtTs}
\end{equation}
For $t\leq 1$, this is clear from Theorem~\ref{thmmain}\eqref{thmmaina}, Lemma ~\ref{lemmabtggQPx}, and Theorem~\ref{thmmain}\eqref{thmmainb}; so let $t=2$.  Since the intersection in \eqref{eqdzRpQbMnsS} is a singleton, the two \ubta{}s are of the form $\set{a,b}$ and $\set{c,b}$. Since $\set{a,c}$ cannot be a third \ubta{}, the elements $a$ and $c$ are comparable, whereby Lemma~\ref{lemmabtwWsPtZ}, and Theorem~\ref{thmmain}\eqref{thmmainc} imply that $k = 26\cdot 2^{n-6}$. Thus, \eqref{eqtxsjtGynfNhZgsZtTs} holds.
Now, the required $k\leq 25\cdot 2^{n-6}$ follows from \eqref{eqtxtczhBmNQwx}, \eqref{eqtxtczkhBserkTlW},  \eqref{eqtxsjtGynfNhZgsZtTs}, and the paragraph above \eqref{eqdzRpQbMnsS}; completing the first part of the proof.

In the rest of the proof, we will always assume that $k = 25
\cdot 2^{n-6}$, even if this is not emphasized all the time.
We claim that 
\begin{equation}
\parbox{9cm}{if $k = 25\cdot 2^{n-6}$ and $t\geq 3$, then 
$t=3$, $v:=v_1=\dots=v_t,$ and \eqref{eqczsdfjsgTzNP} holds for all $1\leq i<j\leq t$.}
\label{eqpbxhpHzHhZbGswR} 
\end{equation}
We obtain from \eqref{eqtxtdzRnYmQl6} that the size of $\set{a_i,b_i,u_i,a_j,b_j,v_j}$ is not 6. We obtain from \eqref{eqtxtstssPrzgdRnY} that it is neither 5, whereby this size is 4 since $\set{a_i,b_i}\neq\set{a_j,b_j}$. Thus,
 \eqref{eqczTrWvMd} implies  the validity of \eqref{eqczsdfjsgTzNP} and $v_1=\cdots=v_n$, which we denote by $v$. The $|\set{a_i,b_i}\cap\set{a_j,b_j}|=1$ part of \eqref{eqczsdfjsgTzNP} implies that, apart from notation (to be more exact, apart from $a$--$b$ symmetry),
\begin{equation}
\parbox{9.4cm}{whenever $1\leq i<j<m\leq m$, then either
$b_i=a_j$, $b_j=a_m$, and $b_m=a_i$, or $b:=b_i=b_j=b_m$ and $|\set{a_i,a_j,a_m}|=3$.}
\label{eqpbxhThnBxVpwhsn}
\end{equation}
It follows similarly to \eqref{eqsztfRml} and \eqref{eqalignZtBjWsCpPSPw} that
\begin{equation}
\parbox{9.0cm}{if the first alternative of \eqref{eqpbxhThnBxVpwhsn}  holds, 
then $|U_i\cup U_j\cup U_m|=\bigl((4+4+4)-(2+2+2)+2\bigr)\cdot 2^{n-6}$, whereby $k\leq (32-8)\cdot 2^{n-6}$, which contradicts $k = 25 \cdot 2^{n-6}$,}
\label{eqpbxdzTnBpWslwD}
\end{equation}
since $U_i\cap U_j\cap U_m=U_i\cap U_j$. Thus, \eqref{eqpbxdzTnBpWslwD} excludes the first alternative of \eqref{eqpbxhThnBxVpwhsn}. 
Hence we have the second alternative,  $|U_i\cap U_j\cap U_m|= 2^{n-6}$, and it follows similarly to \eqref{eqsztfRml} and \eqref{eqalignZtBjWsCpPSPw} that
\begin{equation}
\text{$|U_i\cup U_j\cup U_m|=\bigl((4+4+4)-(2+2+2)+1\bigr)\cdot 2^{n-6}=7 \cdot 2^{n-6}$.}
\label{eqtxtxdzTsPrnnmDslwskK}
\end{equation}
Now, for the sake of contradiction, suppose that $t\geq 4$. Then we can and pick an index  $s\in\set{1,\dots,t}\setminus\set{i,j,m}$. The  \ubta{} $\set{a_s,b_s}$ belongs to $U_s$ but it does not belong $U_i$ since the members of $U_i$ contain both $a_i$ and $b_i$ but $\set{a_s,b_s}\neq \set{a_i,b_i}$. Similarly, 
$\set{a_s,b_s}$ belongs neither to $U_j$, nor to $U_m$, whence it is not in $U_i\cup U_j\cup U_m$. Hence, $U_i\cup U_j\cup U_m$ is a proper subset of $U_i\cup U_j\cup U_m\cup U_s$, which is disjoint from $\subplu$. Thus, by \eqref{eqtxtxdzTsPrnnmDslwskK}, strictly more than $7 \cdot 2^{n-6}$ subsets of $\plu S$ are  \emph{not} in $\subplu$, and we obtain that 
$k=|\subplu|< (32-7)\cdot 2^{n-6}$. This contradicts $k=25\cdot 2^{n-6}$ and excludes that $t\geq 4$. Thus, $t=3$ and we have proved \eqref{eqpbxhpHzHhZbGswR}.

Next, assume that $t\geq 3$. We know from \eqref{eqpbxhpHzHhZbGswR} that $t=3$. Furthermore, by
\eqref{eqpbxhpHzHhZbGswR}, \eqref{eqpbxhThnBxVpwhsn}, and \eqref{eqpbxdzTnBpWslwD}, $\set{a_1,b}$, 
$\set{a_2,b}$, and $\set{a_3,b}$ is the list of all \ubta{}s with a common join $v$. No two of $a_1$, $a_2$, and $a_3$ are  incomparable, since otherwise those two would form a \ubta{} (with upper bound $v$). Hence, we can assume that $a_1<a_2<a_3$. Thus, it follows from Lemma~\ref{lemmaNsix} that $\sla$ is a quasi-tree semilattice with nucleus $N_6$.

Finally, assume that $t\ngeq 3$. By
Theorem~\ref{thmmain}\eqref{thmmaina}--\eqref{thmmainb} and  Lemma~\ref{lemmabtggQPx}, $t\notin\set{0,1}$, whence $t=2$. 
There are several cases to consider.
 
\begin{case}[we assume that $v_1=v_2$ and $\set{a_1,b_1}\cap\set{a_2,b_2}\neq \emptyset$]\label{caseone} By $a$--$b$ symmetry, we can choose the notation so that $a:=a_1$, $b:=b_1=b_2$, and $c:=a_2$. If $a\parallel c$, then $\set{a,c}$ is a third \ubta{} (with upper bound $v_1=v_2$), contradicting $t=2$. Hence, we can assume that $a < c$. But then,  by 
 Lemma~\ref{lemmabtwWsPtZ}, $\sla$ is a quasi-tree semilattice  with nucleus $N_5$, and so  \ref{thmmain}\eqref{thmmainc} gives that $k=26\cdot 2^{n-6}$, a contradiction again since $k=25\cdot 2^{n-6}$ has been assumed. So Case~\ref{caseone} cannot occur.
\end{case} 

\begin{case}[we assume that $v_1=v_2$ and $\set{a_1,b_1}\cap\set{a_2,b_2} = \emptyset$]\label{casetwo} Observe that for 
every $X\subseteq\set{a_1,b_1,a_2,b_2}$ such that  $|X|=2$,
\begin{equation}
\text{if $\set{a_1,b_1}\neq X\neq \set{a_2,b_2}$, then $X$ is not an antichain,}
\label{eqtxtHnbTpxCvWwW}
\end{equation}
since otherwise $X$ would be a third \ubta{} with upper bound $v_1=v_2$. By 1--2 symmetry, we can assume that $a_1<a_2$. 
By \eqref{eqtxtHnbTpxCvWwW}, $a_2$ and $b_1$ are comparable elements. If we had that $a_2\leq b_1$, then we would obtain $a_1\leq b_1$ by transitivity, contradicting that $\set{a_1,b_1}$ is a \ubta{}.
Hence,   $b_1< a_2$. But then the inequality in $v_1=a_1\vee b_1\leq a_2<v_2=v_1$ is a contradiction. Therefore, Case~\ref{casetwo} cannot occur.
\end{case}

Cases~\ref{caseone} and \ref{casetwo} make it clear that now, when $t=2$, we have that $v_1\neq v_2$. We obtain from \eqref{eqtxtdzRnYmQl6} and \eqref{eqczTrWvMd} that 
\begin{equation}
|\set{a_1,b_1,v_1,a_2,b_2,v_2}| = 5.
\label{eqnwxltlfV}
\end{equation}
The following two cases have to be dealt with.

\begin{case}[we assume that $v_1\neq v_2$ and $\set{a_1,b_1,a_2,b_2}\cap\set{v_1,v_2}=\emptyset$]\label{casethree}
This assumption and  \eqref{eqnwxltlfV} allow us to assume that $\set{a_1,b_1}=\set{a,b}$ and $\set{a_2,b_2}=\set{c,b}$. So $v_1=a\vee b$ and $v_2=c\vee b$. For the sake of contradiction, suppose that  $a$ and $c$ are comparable. Let, say, $a<c$; then $v_1=a\vee b \leq c\vee b= v_2$. But $v_1\neq v_2$, so $v_1<v_2$. If we had that $c\leq v_1$, then $v_2= b\vee c\leq v_1$ would 
contradict $v_1<v_2$. 
If we had that $v_1\leq c$, then this would lead to the contradiction $b\leq c$ by transitivity. Hence, $c\parallel v_1$. So $\set{c,v_1}$ is an additional \ubta{} (with upper bound $v_2$), which is a contradiction showing that $a\parallel c$.
If $v_1$ and $v_2$ were comparable, then the larger one of them would be an upper bound of $\set{a,c}$, and so $\set{a,c}$ would be a third \ubta. 
Thus, $v_1\parallel v_2$, and Lemma~\ref{lemmabowtie} gives that $\sla$ is a quasi-tree semilattice with nucleus $F$, as required.
\end{case}

\begin{case}[we assume that $v_1\neq v_2$ and $\set{a_1,b_1,a_2,b_2}\cap\set{v_1,v_2}\neq\emptyset$]\label{casefour}
Since $a$ and $b$ play symmetric roles and so do the subscripts 1 and 2, we can assume that $v_1=a_2$. We have that $|\set{a_1,b_1,a_2,b_2}|=4$ since $b_2\nleq a_2=v_1$ excludes the possibility that $b_2\in\set{a_1,b_1,a_2}$.  
None of the sets $\set{a_1,b_2}$ and $\set{b_1,b_2}$ is an antichain, since otherwise the set in question would be a new \ubta{} with upper bound $v_2$, which would be contradiction.
Hence, $a_1$ and $b_2$ are comparable elements, and so do $b_1$ and $b_2$. If we had that $a_1\geq b_2$ or $b_1\geq b_2$, then 
transitivity would lead to $a_2=v_1\geq b_2$, a contradiction. Thus, $a_1\leq b_2$ and $b_1\leq b_2$. But then $a_2=v_1=a_1\vee b_1\leq b_2$ is  a contradiction. This shows that Case~\ref{casefour} cannot occur.
\end{case}

Now that all cases have been considered,  we have shown that 
 if $k=25\cdot 2^{n-6}$, then $\sla$ is of the required form.

Finally, if $\sla$ is a quasi-tree semilattice with nucleus $N_6$, then using the Inclusion-Exclusion Principle as in \eqref{eqsztfRml} and  \eqref{eqalignZtBjWsCpPSPw}, we obtain that 
\begin{align*}
|\Con(S;\wedge)|=2^{n-6}\bigl(20+(2+2+2)-1\bigr)=25\cdot 2^{n-6},
\end{align*}
as required. Similarly, if the nucleus is $F$, then we follow the method of \eqref{alignzcvngRta} and \eqref{alignzcvngRtb} to obtain the required 
\begin{align*}
|\Con(S;\wedge)|=2^{n-6}\bigl(32-(4+4)+1\bigr)=25\cdot 2^{n-6}.
\end{align*}
This completes the proof of Theorem~\ref{thmmain}\eqref{thmmaind}.
\end{proof}

\end{document}